\documentclass[12pt,reqno]{amsart}
\usepackage[margin=1.25in]{geometry}        
\usepackage{graphicx}
\usepackage{amssymb}
\usepackage{dsfont}
\usepackage{aliascnt}
\usepackage{doi}
\usepackage{epstopdf}
\usepackage{tikz}
\usetikzlibrary{3d}
\usetikzlibrary {arrows.meta}

\usepackage{esint}

\newtheorem{theorem}{Theorem}[section]

\newaliascnt{corx}{thmx}

\aliascntresetthe{corx}

\newaliascnt{lemma}{theorem}
\newtheorem{lemma}[lemma]{Lemma}
\aliascntresetthe{lemma}

\newaliascnt{proposition}{theorem}
\newtheorem{proposition}[proposition]{Proposition}
\aliascntresetthe{proposition}

\newaliascnt{corollary}{theorem}
\newtheorem{corollary}[corollary]{Corollary}
\aliascntresetthe{corollary}

\newaliascnt{conjecture}{theorem}
\newtheorem{conjecture}[conjecture]{Conjecture}
\aliascntresetthe{conjecture}

\newaliascnt{question}{theorem}

\aliascntresetthe{question}

\theoremstyle{definition}
\newtheorem*{definition*}{Definition}
\newtheorem*{example*}{Example}
\newtheorem*{remark*}{Remark}
\newtheorem*{remarks*}{Remarks}

\newcommand{\B}{{\mathbb B}}
\newcommand{\e}{\varepsilon}

\newcommand{\R}{{\mathbb R}}

\newcommand{\Rn}{{{\mathbb R}^n}}
\newcommand{\Rnp}{{{\mathbb R}^{n+1}}}
\newcommand{\Sph}{{\mathbb S}}

\newcommand{\xh}{\hat{x}}
\newcommand{\yh}{\hat{y}}

\newcommand{\Z}{{\mathbb Z}}

\DeclareMathOperator{\csch}{csch}

\DeclareMathOperator{\capzero}{Cap_0}
\DeclareMathOperator{\capone}{Cap_1}

\DeclareMathOperator{\capq}{Cap_\mathit{q}}

\title{Riesz energy deformation through insulated strips}

\author{Carrie Clark and Richard S. Laugesen}
\address{}
\email{carrieclark435@gmail.com}
\address{University of Illinois, Urbana IL 61801, USA}
\email{Laugesen@illinois.edu}

\keywords{Riesz capacity, Newtonian capacity, electrostatics, potential theory}
\subjclass[2020]{\text{Primary 31B15}}

\begin{document}

\begin{abstract}
For compact sets in Euclidean space, Riesz energies whose exponents differ by $1$ are shown to arise as the endpoint cases of a one-parameter family of infinite-strip energies as the strip thickness increases from $0$ to $\infty$, under Neumann boundary conditions. An approach is suggested to a capacity conjecture of P\'{o}lya and Szeg\H{o}. 
\end{abstract}

\maketitle

\section{\bf Introduction}

How should one interpolate between pairwise interaction energies having different strength singularities? Can one connect the energies in some physically natural manner? The family of Riesz energies provides the prototypical situation. A compact nonempty set $K \subset \Rn$ has Riesz $q$-energy  
\[
V_q(K) = \min_\mu \int_K \! \int_K \frac{1}{|x-y|^q} \, d\mu(x) d\mu(y) , \qquad 0 < q < n . 
\]
The minimum is taken over probability measures on $K$, meaning $\mu$ is a positive unit Borel measure, and the minimum is attained by an ``equilibrium" measure whose properties depend of course on the exponent $q$. (Intuitively, equilibrium measures are the ones that most effectively spread out the repelling charges $\mu$ on the set $K$.) Note the energy is either positive or $+\infty$. One similarly defines the logarithmic energy $V_{log}(K)$ in terms of the logarithmic kernel $\log 1/|x-y|$. 

In classical electrostatics the exponent is $q=n-2$, but if the same set $K$ is regarded as lying in the higher dimensional space $\Rnp$ then the electrostatic exponent is $q=n-1$. Thus there are natural reasons to consider a fixed set $K$ with different exponents $q$. 

Simply varying the exponent $q$ in order to deform one Riesz energy into another, though, would feel artificial. A more natural parameterized family of interpolating energies is suggested by the preceding electrostatic situation: we can try to reduce the ambient dimension from $n+1$ to $n$ by passing through a family of insulated infinite strips, and hence reduce $q$ to $q-1$. 

In order to state our result, take $G_t(\xh,\yh)$ to be the Riesz-type kernel for points $\xh$ and $\yh$ in the strip of thickness $2t$ that is shown in \autoref{fig:strip}: 
\[
S(t) = \Rn \times (-t,t) \subset \Rnp .
\]
This kernel $G_t$ is defined precisely in \autoref{sec:definitions} below: it behaves like $|\xh-\yh|^{-q}$ when $\xh$ and $\yh$ are close together, and the kernel satisfies a Neumann boundary condition when $\xh$ lies at the boundary and $\yh$ is inside the strip. The energy induced by the kernel is 
\[
E_K(t) = \min_\mu \int_K \! \int_K G_t(\xh,\yh) \, d\mu(\xh) d\mu(\yh) , 
\]
where the minimum is taken over all probability measures on the set $K$. Due to the Neumann boundary condition on the kernel, we may interpret  $E_K(t)$ as the energy of a conductor $K$ that is positioned between two insulated hyperplanes in $\Rnp$ at heights $\pm t$. 

Our main result says that as $t$ increases from $0$ to $\infty$, these strip energies form a one-parameter family connecting the endpoint cases, which we show are the Riesz energies with exponents $q-1$ and $q$. 
\begin{theorem}[Connecting Riesz energies for $K \subset \Rn$] \label{th:main}
Fix $1 \leq q < n$. If $K \subset \Rn$ is compact with finite Riesz $q$-energy then $E_K(t)$ is a $C^1$-smooth function of $t>0$ that interpolates between the $q$- and $(q-1)$-energies:  
\[
\lim_{t \to \infty} E_K(t) = V_q(K) \quad {and} \quad  
\liminf_{t \to 0} t E_K(t) \geq 
\begin{cases}
c_{q-1} V_{q-1}(K) & \text{when $q>1$,} \\
V_{log}(K) & \text{when $q=1$,}
\end{cases}
\]
where the positive constant $c_{q-1}$ is defined in \eqref{cdef} below. 

Equality holds as $t \to 0$ if $K$ is interior $(q-1)$-capacitable in $\Rn$ (as defined in \autoref{sec:energyresults}), meaning that in this case $\lim_{t \to 0} t E_K(t) = c_{q-1} V_{q-1}(K)$ when $q>1$ and the limit equals $V_{log}(K)$ when $q=1$. In particular, equality holds as $t \to 0$ if $K$ is a convex body in $\Rn$. 
\end{theorem}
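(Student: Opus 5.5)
The plan starts from the method of images. Since the definition in \autoref{sec:definitions} is not visible here, I will assume that $G_t$ is built by even reflection across the hyperplanes $x_{n+1}=\pm t$. For points $\xh=(x,0)$ and $\yh=(y,0)$ of $K \subset \Rn\times\{0\}$ this gives
\[
G_t(\xh,\yh) = \sum_{k\in\Z} \bigl(|x-y|^2 + (2kt)^2\bigr)^{-q/2},
\]
possibly with a renormalizing constant subtracted so that the series converges when $q\le 1$. If the actual definition differs, I will use whatever closed form it gives for the restriction to $K$; the steps below only use monotonicity in $t$, convergence of the image sum, and its $t\to0$ asymptotics.

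\textbf{Step 1: convexity.} I will establish that $G_t$ restricted to $\Rn$ is a positive definite kernel for each $t$. This follows by writing each image term as a superposition of Gaussians, or by taking the Fourier transform in the $\Rn$ variable. Positive definiteness gives existence and uniqueness of an equilibrium measure $\mu_t$ on $K$. Finiteness of $E_K(t)$ follows because $G_t$ differs from $|x-y|^{-q}$ by a term that is bounded on $K\times K$.

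\textbf{Step 2: regularity in $t$ and the limit $t\to\infty$.} The function $t\mapsto G_t(x,y)$ is smooth and monotone (decreasing for $q>1$), with $t$-derivatives uniformly bounded on $K\times K$ on compact subsets of $(0,\infty)$. Hence $E_K$ is a minimum of a family of smooth functions with equicontinuous derivatives. By a Danskin-type envelope argument, uniqueness of the minimizer $\mu_t$ together with weak-$*$ continuity of $t\mapsto\mu_t$ gives $E_K'(t)=\iint \partial_t G_t\,d\mu_t\,d\mu_t$, which is continuous, so $E_K$ is $C^1$. As $t\to\infty$, the image terms with $k\ne0$ tend to $0$ uniformly on $K\times K$ (after the normalization, when $q\le1$). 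Testing with the $q$-equilibrium measure gives $\limsup E_K(t)\le V_q(K)$. For the reverse inequality, $G_t\ge |x-y|^{-q}-o(1)$ uniformly on $K\times K$, which yields $\liminf E_K(t)\ge V_q(K)$. This is where finiteness of $V_q(K)$ is used.

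\textbf{Step 3: the limit $t\to0$.} Multiplying by $t$ turns the sum over $k$ into a Riemann sum:
\[
tG_t(x,y)\to \tfrac12\int_\R (|x-y|^2+s^2)^{-q/2}\,ds = c_{q-1}|x-y|^{-(q-1)}
\]
when $q>1$, which presumably matches \eqref{cdef}. When $q=1$ one gets $\log 1/|x-y|$ plus a constant that the normalization should remove. The convergence is monotone from below, or at least has a uniformly controlled error of the form $tG_t \ge c_{q-1}|x-y|^{1-q} - Ct$. Take weak-$*$ limits of the minimizers $\mu_t$ and truncate the kernel at level $M$ so that it becomes continuous. Lower semicontinuity then gives $\liminf tE_K(t)\ge c_{q-1}V_{q-1}(K)$, and similarly $V_{log}(K)$ when $q=1$.

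\textbf{Step 4: the upper bound, which is the main obstacle.} Testing with the $(q-1)$-equilibrium measure of $K$ fails when that measure does not have finite $q$-energy. In that case $tE_K(t)$ is bounded only by the $t$-energy of other measures, so the $(q-1)$-equilibrium measure cannot be used directly. Interior capacitability is exactly what is needed here. If $K$ is interior $(q-1)$-capacitable, then $V_{q-1}(K)$ is approximated by the energies of (sub)sets or measures that are regular enough, for instance measures with bounded potentials of finite $q$-energy, supported in $K$. For such a measure $\nu$, the bound $tG_t\le c_{q-1}|x-y|^{1-q}+O(t)\,|x-y|^{-q}$ yields
\[
\limsup_{t\to0} t\iint G_t\,d\nu\,d\nu \le c_{q-1}\iint |x-y|^{1-q}\,d\nu\,d\nu,
\]
using dominated convergence. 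Taking the infimum over $\nu$ gives equality. For a convex body, I would verify capacitability by dilating $K$ slightly toward an interior point and smoothing the equilibrium measure (convolution), noting that the energy of the dilate scales continuously to $V_{q-1}(K)$.
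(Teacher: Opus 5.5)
Your Steps 1, 2 and 4 follow the paper's proof:
\begin{itemize}
\item uniqueness of $\mu_t$ from complete monotonicity (for $q=1$ the renormalized kernel is only conditionally positive definite, i.e.\ on signed measures of total mass zero, but that is all uniqueness needs);
\item the Danskin-type difference-quotient argument with weak-$*$ continuity of $\mu_t$;
\item the trial-measure argument as $t\to\infty$;
\item the upper bound via mollified $(q-1)$-equilibrium measures of compact $Q\subset K^{int}$, combined with $tG_t=c_{q-1}|x-y|^{1-q}+O(t)\,|x-y|^{-q}$.
\end{itemize}

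The gap is in Step 3. The kernel inequality that drives $\liminf_{t\to0}tE_K(t)\ge c_{q-1}V_{q-1}(K)$ is asserted, not proved, and neither alternative you offer is established.
\begin{itemize}
\item If ``monotone from below'' means that $tG_t$ increases to its limit as $t\downarrow0$, it is false. By \eqref{eq:kernelsimplified}, for $n=3$, $q=2$ one has $tG_t(x,y)=\frac{\pi}{2|x-y|}\coth\frac{\pi|x-y|}{2t}$, which decreases to $\frac{\pi}{2}|x-y|^{-1}$ from above.
\item The fallback $tG_t\ge c_{q-1}|x-y|^{1-q}-Ct$, uniformly on $K\times K$, is true, but only because of a sign. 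At $|x-y|\approx t$ the difference $tG_t-c_{q-1}|x-y|^{1-q}$ is genuinely of order $t^{1-q}$. So no absolute-value Riemann-sum error bound can produce it; such bounds give only $O(t\,|x-y|^{-q})$.
\end{itemize}
What is needed is that this difference is \emph{positive}, and that is the real content of the step.

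The paper proves exactly this in \autoref{pr:kernel}(d). Set $F(s)=(1+s^2)^{-q/2}$ and $\tau=2t/|x-y|$. Poisson summation gives $\sum_{j\in\Z}\tau F(j\tau)-\int_\R F\,ds=\sum_{m\neq0}\widehat{F}(m/\tau)$. Here $\widehat F>0$: it is a positive multiple of $|\xi|^{(q-1)/2}K_{(q-1)/2}(2\pi|\xi|)$, and positivity also follows from the Gaussian-mixture representation you already use in Step 1. Hence $tG_t(x,y)>c_{q-1}|x-y|^{1-q}$ for every $t>0$. For $q=1$, a sawtooth/Parseval variant gives $tG_t(x,y)>\log(1/|x-y|)$. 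This yields $tE_K(t)\ge c_{q-1}V_{q-1}(K)$ for all $t$, with no compactness argument.

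If you only want the liminf, your weak-$*$/truncation scheme can instead be completed by elementary means, once you add the missing near-diagonal input: $tG_t(x,y)$ is a decreasing function of $|x-y|$. Set $\delta_M:=(c_{q-1}/M)^{1/(q-1)}$.
\begin{itemize}
\item For $|x-y|<\delta_M$, use this monotonicity.
\item For $|x-y|\ge\delta_M$, use the off-diagonal bound $tG_t\ge c_{q-1}|x-y|^{1-q}-Ct\,|x-y|^{-q}$.
\end{itemize}
Together these give $tG_t\ge\min\bigl(M,c_{q-1}|x-y|^{1-q}\bigr)-Ct\,\delta_M^{-q}$ on $K\times K$. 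When $q=1$, use $\log(1/|x-y|)$ in place of $c_{q-1}|x-y|^{1-q}$ and $\delta_M=e^{-M}$. Then pass to a weak-$*$ limit of $\mu_t$ and let $M\to\infty$.
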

\autoref{th:main} follows from combining the asymptotic formula for the energy as $t \to \infty$ (\autoref{th:energybigt} or \autoref{co:energybigtKinRn}) with the asymptotic as $t \to 0$ (\autoref{th:t-to-zero}) and an energy derivative formula (\autoref{pr:EKderiv}) that holds for all $t$. Those results therefore constitute the core of the paper. 

\subsection*{Physical motivations, and an open problem of P\'{o}lya and Szeg\H{o}}
Both endpoint limits in the theorem are physically motivated. When $t \to \infty$, the strip boundary recedes to infinity and ceases to affect the kernel, hence reducing $G_t$ to just the original Riesz kernel. When $t \to 0$, on the other hand, the insulated boundaries bracket both sides of the set $K$, suggesting that the kernel should be independent of the vertical direction and so the effective dimension and corresponding Riesz exponent should be reduced by $1$. This key intuition is made rigorous  in \autoref{pr:kernel}(d) and \autoref{pr:twosided}. 

Notably, for $(n,q)=(2,1)$ the theorem provides a physically natural one-parameter family of strip energies that connects the planar logarithmic energy of a set $K \subset \R^2$ to the Newtonian electrostatic energy of $K$ regarded as a subset of $\R^3$. For these energies, P\'{o}lya and Szeg\H{o} conjectured in 1945 that when passing from the logarithmic to Newtonian situation, the disk retains more capacity than any other planar set:   
\begin{equation} \label{PS2}
\capone(K) \leq \frac{2}{\pi} \capzero(K) 
\end{equation}
whenever $K \subset \R^2$ is compact, and equality holds when $K$ is a disk \cite[Conjecture (1.3)]{PS45}. Here $\capone(K)=1/V_1(K)$ is the Newtonian capacity of $K$ and $\capzero(K)=\exp(-V_{log}(K))$ is its logarithmic capacity. 

This conjecture remains open to the best of our knowledge. The family of strip energies in the current paper provides a promising framework for trying to resolve it, through the following stronger conjecture. 
\begin{conjecture} \label{co:framework}
Fix $1 \leq q < n$. Suppose $K \subset \Rn$ is compact and $B \subset \Rn$ is a closed ball. If $\lim_{t \to \infty} E_K(t) = \lim_{t \to \infty} E_B(t)$ then $E_K(t) \leq E_B(t)$ for all $t>0$. 
\end{conjecture}
Recalling from \autoref{th:main} the limiting values of the energy as $t \to \infty$ and $t \to 0$, we see \autoref{co:framework} would imply that if $V_q(K)=V_q(B)$ then $V_{q-1}(K) \leq V_{q-1}(B)$ when $q>1$ and $V_{log}(K) \leq V_{log}(B)$ when $q=1$. That conclusion is equivalent, when $n=2$ and $q=1$, to the P\'{o}lya--Szeg\H{o} conjecture \eqref{PS2}. 

Our recent work \cite{CL25b} investigated the P\'{o}lya--Szeg\H{o} capacity conjecture not just for Riesz parameters $q-1$ and $q$ as in \autoref{co:framework} but for the full range of exponents $p$ and $q$, including negative values. Some of the resulting conjectures can be proved, and the limiting cases with $p \to -\infty$ or $q \to n$ turn out (respectively) to be known theorems of Szeg\H{o} \cite{S31} for fixed diameter capacity maximization and of Watanabe \cite{W83} for fixed-volume capacity minimization. Another limiting case is the classical isodiametric theorem for maximizing volume among sets of fixed diameter. Thus evidence is building in support of the general conjecture, although the original problem \eqref{PS2} remains open. 

Incidentally, an unrelated capacity conjecture by P\'{o}lya and Szeg\H{o} asserts that among convex sets in $\R^3$ with given surface area, the Newtonian capacity ($q=1$) is minimal in the degenerate case of a double-sided disk. This problem too remains open, although excellent partial results have been achieved in recent years; for example, see \cite{BFL12,FGP11,X17}. 

\subsection*{\bf Plan of the paper} \autoref{sec:definitions} constructs the kernels and energies. In \autoref{sec:energyresults} we state the results mentioned above that are used to establish \autoref{th:main}. \autoref{sec:kernelproperties} develops basic properties of the strip kernel $G_t$, while \autoref{sec:twosided} gives upper and lower estimates on the kernel, \autoref{sec:improved} provides an asymptotic kernel estimate as $t \to \infty$, and then \autoref{sec:stripenergy} utilizes these tools to prove the results stated in \autoref{sec:energyresults}. To complete the paper, \autoref{sec:computableexample} provides a closed form expression for the kernel $G_t$ in $n=3$ dimensions when $q=2$. 

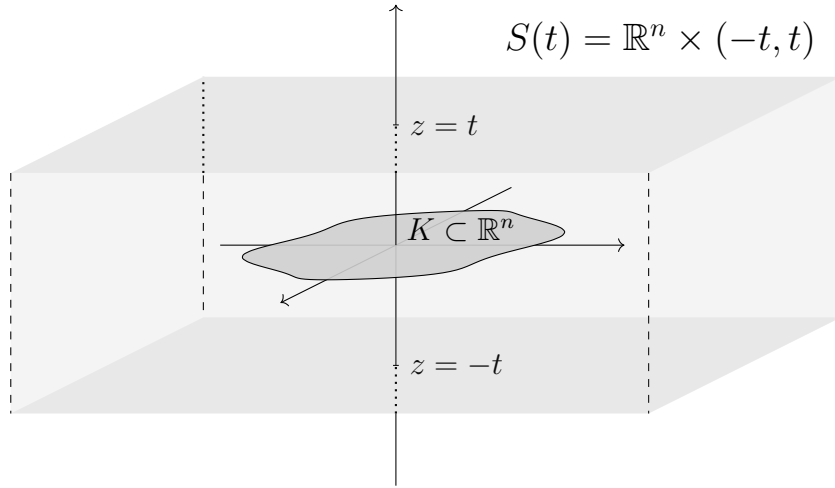
\begin{figure}
\begin{center}
\begin{tikzpicture}[x={(2cm,0cm)},y={(0.8cm,0.4cm)},z={(0cm,1cm)},scale=1.6]

\fill[gray!9] (1.45,1,1) -- (1.45,1,-1) -- (1.45,-1,-1) -- (-1.2,-1,-1) -- (-1.2,-1,1) -- (-1.2,1,1) -- cycle;
\draw[-,dashed] (1.45,1,1) -- (1.45,1,-1);
\draw[-,dashed] (1.45,-1,1) -- (1.45,-1,-1);
\draw[-,dashed] (-1.2,1,0.18) -- (-1.2,1,-1);
\draw[-,dotted,thick] (-1.2,1,0.2) -- (-1.2,1,1);
\draw[-,dashed] (-1.2,-1,1) -- (-1.2,-1,-1);

\draw[->] (-0.73,0,0) -- (0.95,0,0);
\draw[->] (0,1.2,0) -- (0,-1.2,0) ;
\draw[-] (0,0,0) -- (0,0,-1);
\draw[dotted, thick] (0,0,-1.4) -- (0,0,-1);
\draw[-] (0,0,-1.4) -- (0,0,-2);
\draw[-] (-0.012,0,1) -- (0.012,0,1) node[right] {$z=t$};
\draw[-] (-0.012,0,-1) -- (0.012,0,-1) node[right] {$z=-t$};
 \filldraw[fill opacity=0.9,gray!37,draw=black] plot[smooth,samples=50,domain=0:356,variable=\t] ({0.55*(cos(\t)+0.1*sin(5*\t)+0.05},{0.72*(sin(\t)+0.02*sin(2*\t))}) -- cycle;

\draw[-] (0,0,0) -- (0,0,0.6);
\draw[dotted, thick] (0,0,0.6) -- (0,0,1);
\draw[->] (0,0,1) -- (0,0,2);

\draw (1.1,0,1.7) node {\large$S(t)=\Rn\times (-t,t)$};
\draw (0.27,0,0.13) node {$K\subset\Rn$}; 

\fill[fill opacity=0.1,gray] (1.45,1,1) -- (1.45,-1,1) -- (-1.2,-1,1) -- (-1.2,1,1) -- cycle;
\fill[fill opacity=0.1,gray] (1.45,1,-1) -- (1.45,-1,-1) -- (-1.2,-1,-1) -- (-1.2,1,-1) -- cycle;
\end{tikzpicture}
\end{center}
\caption{
\label{fig:strip} 
The compact set $K \subset \Rn$ in the strip $S(t)=\Rn\times(-t,t)$. Points in the strip are written $\xh = (x,z)$. 
}
\end{figure}

\section{\bf Kernel and energy definitions}
\label{sec:definitions}

\subsection*{Riesz energy and capacity in all of space}
References for the following definitions and facts can be found in \cite{CL25}. 

Consider a compact nonempty set $K$ in a Euclidean space. (The dimension of the space does not enter into the following definitions and so need not be specified.) When $q > 0$, the Riesz $q$-energy of $K$ is
\[
V_q(K) = \min_\mu \int_K \! \int_K |x-y|^{-q} \, d\mu(x) d\mu(y) , 
\]
where the minimum is taken over all probability measures on $K$, that is, positive unit Borel measures. The minimum is attained by an ``equilibrium" measure. The energy is positive or $+\infty$, and if the energy is finite then the equilibrium measure is unique, in which case we call it the $q$-equilibrium measure of $K$. For the empty set we define $V_q(\emptyset)$ to equal $+\infty$. 

When $q=0$ we consider instead the logarithmic energy 
\[
V_{log}(K) = \min_\mu \int_K \! \int_K \log \frac{1}{|x-y|} \, d\mu(x) d\mu(y) ,
\]
with the minimum taken over probability measures on $K$. The minimum is attained by an equilibrium measure. The energy is greater than $-\infty$ since $|x-y|$ is bounded on $K \times K$. If the energy is less than $+\infty$ then the logarithmic equilibrium measure is unique and is called the $0$-equilibrium measure. For the empty set, define $V_{log}(\emptyset)=+\infty$. 

The Riesz capacity of $K$ is 
\begin{equation*} 
\capq(K) = 
\begin{cases}
V_q(K)^{-1/q} , & q > 0 , \\
\exp(-V_{log}(K)) , & q = 0 .
\end{cases}
\end{equation*}
The $0$-capacity is also called logarithmic capacity. Notice the Riesz capacity is positive if and only if the energy is finite. The definition also ensures that capacity is monotonic with respect to set inclusion ($K_1 \subset K_2$ implies $\capq(K_1) \leq \capq(K_2)$) and that capacity scales linearly, that is, $\capq(sK)=s \capq(K)$ whenever $s>0$. 

In this paper we work in $\Rnp$ and for that reason consider only $q<n+1$, since the capacity is known to vanish (energy is infinite) when $q$ is greater than or equal to the dimension of the ambient space. 

\begin{remark*} Properties of capacity as a function of $q$ are investigated in our recent paper \cite{CL25}. For example, $q \mapsto \capq(K)$ is strictly decreasing and left-continuous, but it can jump from the right. And in the endpoint case where $q$ approaches the dimension of the Euclidean space, Riesz capacity determines the volume of $K$ according to $\text{Vol}(K) = |\Sph^{n-1}| \lim_{q \nearrow n} \frac{\capq(K)^q}{n-q} $. Other recent developments for Riesz capacities can be found in a paper by Liu and Xiao \cite{LX25}. Known formulas for the Riesz capacity of a ball are collected in \cite[Appendix A]{CL25}. 
\end{remark*}

\subsection*{Riesz kernel on infinite strip with Neumann boundary conditions}
Fix $n \geq 1$ and let $t>0$. For each $j \in \Z$, construct a transformation 
\[
\rho_j(y,w)=(y,2tj+(-1)^j w)  
\] 
for $y \in \Rn, w \in \R$, so that $\rho_j$ is a vertical translation when $j$ is even and is a reflection and translation when $j$ is odd, as illustrated in \autoref{fig:yhat}. Although the mapping $\rho_j$ depends on $t$, we suppress that dependence for notational simplicity. Clearly $\rho_0$ is the identity map. 
\begin{definition*}[Strip kernel by method of reflections]
Let $t>0$ and $q>1$. Define a kernel on the closure of the strip 
\[
S(t)=\Rn \times (-t,t) 
\]
by letting 
\begin{equation} \label{eq:rieszkernelq}
G_t(\xh,\yh) = \sum_{j \in \Z} \frac{1}{| \xh-\rho_j(\yh) |^q} 
\end{equation}
when 
\[
\xh=(x,z), \ \yh=(y,w) \in \overline{S(t)} , 
\]
that is, when $x,y \in \Rn$ and $z,w\in[-t,t]$. Note that the series for $G_t$ is greater than zero everywhere, and converges locally uniformly wherever $\xh \neq \yh$, since $q >1$. Obviously the kernel $G_t$ depends on $q$, but we do not track that dependence since $q$ is fixed. We will see in \autoref{sec:kernelproperties} that the kernel satisfies Neumann conditions on the boundary planes of the strip.

In order to define the kernel when $q=1$, we renormalize the formula by subtracting a summand that cancels the divergent leading order term. Precisely, when $q=1$ we define
\begin{equation}\label{eq:rieszkernelone}
G_t(\xh,\yh) 
= \frac{1}{|\xh-\yh|}+ \sum_{j\neq 0}\left( \frac{1}{|\xh-\rho_j(\yh)|} -\frac{1}{|2tj|} \right) + \frac{1}{t}(\gamma-\log (4t))
\end{equation}
for $\xh, \yh \in \overline{S(t)}$, where $\gamma \simeq 0.577$ is the Euler--Mascheroni constant. The series converges locally uniformly where $\xh \neq \yh$, since $1/|\xh-\rho_j(\yh)|$ behaves like $1/|2tj| + O(1/j^2)$ for large $|j|$. Thus in particular, although we do not know that the kernel is positive when $q=1$, it is certainly bounded below provided $\xh$ and $\yh$ lie in some bounded subset of the closed strip. 

When $z=w=0$, so that $\xh=(x,0)$ and $\yh=(y,0)$, we sometimes abuse notation and write the kernel at those points simply as $G_t(x,y)$. 
\qed
\end{definition*}

The additive term $(\gamma-\log (4t))/t$ in definition \eqref{eq:rieszkernelone} when $q=1$ ensures there is no additive constant term in the asymptotic formula for the kernel when $|x-y|$ is large, in \autoref{pr:twosided} below. 

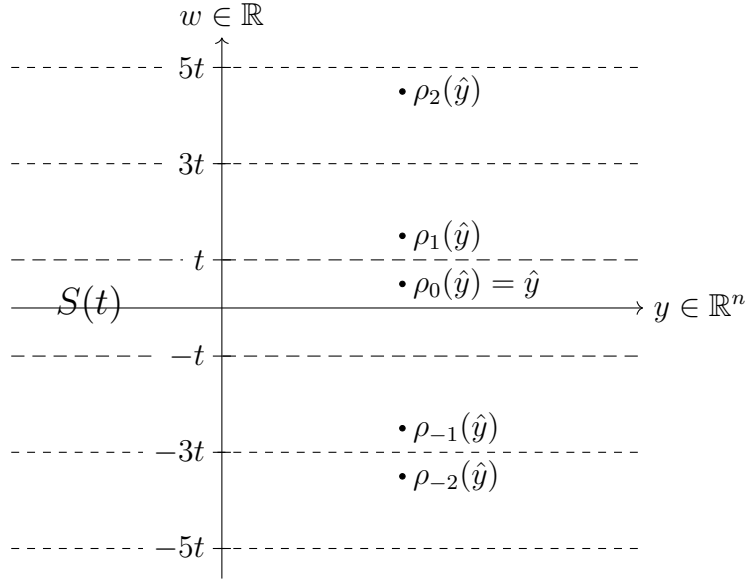
\begin{figure}
\begin{center}
\begin{tikzpicture}[scale=0.8]
\def\t{0.8};
\def\xmin{-3.5};
\def\xmax{7};
\def\zmin{-5*\t-0.5};
\def\zmax{5*\t+0.5};
\def\y{3};
\def\w{\t/2};
\draw[dash pattern=on 5pt off 3pt] (\xmin,\t) -- (\xmax,\t);
\draw[dash pattern=on 5pt off 3pt] (\xmin,-\t) -- (\xmax,-\t);\
\draw[dashed] (\xmin,3*\t) -- (\xmax,3*\t);
\draw[dashed] (\xmin,5*\t) -- (\xmax,5*\t);
\draw[dashed] (\xmin,-3*\t) -- (\xmax,-3*\t);
\draw[dashed] (\xmin,-5*\t) -- (\xmax,-5*\t);
\draw[black,fill=black] (\y,\w) circle (.25ex) node[right]{$\rho_{0}(\yh)=\yh$};
\draw[black,fill=black] (\y,{2*\t*1+\w*(-1)^1}) circle (.25ex) node[right]{$\rho_{1}(\yh)$};
\draw[black,fill=black] (\y,{2*\t*2+\w*(-1)^2}) circle (.25ex) node[right]{$\rho_{2}(\yh)$};
\draw[black,fill=black] (\y,{2*\t*(-1)+\w*(-1)^(-1)}) circle (.25ex) node[right]{$\rho_{-1}(\yh)$};
\draw[black,fill=black] (\y,{2*\t*(-2)+\w*(-1)^(-2)}) circle (.25ex) node[right]{$\rho_{-2}(\yh)$};
\draw (-0.1,\t) node[left,fill=white] {$t$};
\draw (-0.1,-\t) node[left,fill=white] {$-t$};
\draw (-0.1,3*\t) node[left,fill=white] {$3t$};
\draw (-0.1,5*\t) node[left,fill=white] {$5t$};
\draw (-0.1,-3*\t) node[left,fill=white] {$-3t$};
\draw (-0.1,-5*\t) node[left,fill=white] {$-5t$};
\draw[-] (-0.15,\t) -- (0.15,\t);
\draw[-] (-0.15,-\t) -- (0.15,-\t);
\draw[-] (-0.15,3*\t) -- (0.15,3*\t);
\draw[-] (-0.15,-3*\t) -- (0.15,-3*\t);
\draw[-] (-0.15,5*\t) -- (0.15,5*\t);
\draw[-] (-0.15,-5*\t) -- (0.15,-5*\t);
\draw (-2.2,0.05) node {\large$S(t)$};
\draw[->] (\xmin,0) -- (\xmax,0) node[right] {$y \in \Rn$};
\draw[->] (0,\zmin) -- (0,\zmax) node[above] {$w \in \R$};
\end{tikzpicture}
\end{center}
\caption{\label{fig:yhat} The heights of the points $\rho_j(\yh)$ are determined by repeated reflection across the strip boundaries at height $\pm t$. For example, $\rho_0(\yh)$ reflects to $\rho_{\pm 1}(\yh)$, which then reflect to $\rho_{\mp 2}(\yh)$, and so on. This method of reflections ensures that the kernel $G_t$ satisfies a Neumann condition at the strip boundaries.}
\end{figure}

\subsection*{Riesz energy on infinite strip under Neumann boundary conditions}
Fix $n \geq 1$ and $q \geq 1$ and consider a nonempty compact set $K \subset \Rnp$. Supposing $t>0$ is large enough that $K \subset S(t)$, we may define the strip energy of $K$ to be 
\[
E_K(t) = \min_\mu \int_K \! \int_K G_t(\xh,\yh) \, d\mu(\xh) d\mu(\yh) , 
\]
where the minimum is taken over all probability measures on $K$. See \cite[Lemma 4.1.3]{BHS19} for the existence of an equilibrium measure that achieves the minimum.  If $K$ is empty, define the energy to be $+\infty$.

If $K \subset \Rn \simeq \Rn \times \{0\}$ and the energy is finite then the equilibrium measure is unique and we denote it $\mu_t$. The uniqueness is proved below in \autoref{le:uniqueness}.

Remember that $G_t$ and the energy $E_K(t)$ depend upon the choice of exponent $q$ in the kernel. Because $q$ is fixed, we omit this dependence from the notation.

In the limiting case $t \to \infty$ the strip fills all of $\Rnp$ and so when $q \geq 1$ it makes sense to define $G_\infty$ to be the $q$-Riesz kernel, 
\[
G_\infty(\xh,\yh)=\frac{1}{|\xh-\yh|^q} .
\]
Hence by definition
\[
E_K(\infty)=V_q(K)
\]
is simply the $q$-Riesz energy of $K$. 

We conclude the section by addressing some uniqueness and finiteness issues.

\begin{lemma}[Finite energy for a set in the hyperplane $\Rn$ implies unique equilibrium measure] \label{le:uniqueness}
Suppose $q\geq 1$ and $t>0$. If $K$ is a compact subset of $\Rn$ and $E_K(t)$ is finite then the equilibrium measure $\mu_t$ for the energy $E_K(t)$ is unique. 
\end{lemma}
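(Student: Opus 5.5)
Uniqueness will follow from the standard convexity argument once we know that the kernel $G_t$, restricted to the hyperplane $\Rn \times \{0\}$, is strictly positive definite on signed measures of finite energy. So the plan is to prove that the quadratic form
\[
I(\nu) = \int \! \int G_t(x,y) \, d\nu(x) d\nu(y)
\]
is strictly positive for every nonzero signed measure $\nu$ on $K$ with $\nu(K)=0$ and $I(|\nu|)<\infty$. With that in hand, suppose $\mu_1,\mu_2$ are both equilibrium measures with energy $E=E_K(t)<\infty$. Set $\nu=\mu_1-\mu_2$. The parallelogram identity
\[
I\bigl(\tfrac12(\mu_1+\mu_2)\bigr) = \tfrac12 I(\mu_1)+\tfrac12 I(\mu_2) - \tfrac14 I(\nu)
\]
then shows that $I(\nu)>0$ would produce a probability measure with energy below $E$, which is a contradiction. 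Hence $\nu=0$. When $q=1$ the kernel is only bounded below on $K\times K$, not positive, but that is enough: all integrals are finite, and the additive constant $(\gamma-\log 4t)/t$ contributes $c\,\nu(K)^2=0$, so it plays no role.

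For positive definiteness, restrict to $z=w=0$. There $|\xh-\rho_j(\yh)|^2 = |x-y|^2 + (2tj)^2$, so
\[
G_t(x,y)=\sum_{j\in\Z} \bigl(|x-y|^2+4t^2j^2\bigr)^{-q/2},
\]
with the renormalizing constants $-1/|2tj|$ added when $q=1$; these constants integrate to zero against $\nu\otimes\nu$. We then use the subordination identity
\[
s^{-q/2} = \Gamma(q/2)^{-1}\int_0^\infty \lambda^{q/2-1} e^{-\lambda s}\,d\lambda .
\]
This writes each summand as a positive mixture of Gaussians $e^{-\lambda|x-y|^2}$, each multiplied by the positive factor $e^{-4\lambda t^2 j^2}$. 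Gaussians are strictly positive definite on $\Rn$, since their Fourier transform is a positive Gaussian. Therefore
\[
I(\nu)=\int_0^\infty (\text{positive weight})\int_{\Rn} |\hat\nu(\xi)|^2 (\text{positive Gaussian in } \xi)\, d\xi\, d\lambda ,
\]
and this is $>0$ unless $\hat\nu\equiv0$, that is, unless $\nu=0$.

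For $q=1$ a single term $|x-y|^{-1}$ alone is fine, but the summed series diverges. We therefore subtract $\int_0^\infty \lambda^{-1/2}e^{-4\lambda t^2j^2}\,d\lambda$ termwise, which is exactly $\sqrt\pi/|2tj|$. The difference $e^{-\lambda|x-y|^2}-1$ integrates against $\nu\otimes\nu$ to the same value as $e^{-\lambda|x-y|^2}$, because $\nu(K)=0$. The representation therefore survives.

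The main obstacle is justifying the Fubini interchanges: exchanging the sum over $j$, the $\lambda$-integral, and integration against $\nu\otimes\nu$, and in particular doing so for signed measures in the $q=1$ case, where individual terms are not absolutely summable. We handle this by splitting $\nu=\nu^+-\nu^-$. Finiteness of the energies of $\mu_1$ and $\mu_2$ gives $\int\!\int |x-y|^{-q}\,d|\nu|\,d|\nu|<\infty$, since near the diagonal $G_t$ is comparable to the $j=0$ term and the remaining terms are bounded on $K\times K$. The tail terms with $j\neq0$ are handled by the $O(1/j^2)$ bound, uniform on $K\times K$, which is dominated convergence. Alternatively, we can avoid the tail issue by truncating to $|j|\le N$, proving $I_N(\nu)\ge0$ for each $N$ via the Fourier argument above, and passing to the limit by uniform convergence off the diagonal. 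Strictness then comes from the $j=0$ term alone, since the $q$-Riesz kernel (or $|x-y|^{-1}$) is strictly positive definite on $\Rn$ for $0<q<n+1$.
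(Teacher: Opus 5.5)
Your argument is correct and is essentially the paper's proof unpacked. The paper writes $G_t(x,y)=f(|x-y|^2)$, checks that $f$ (or $-f'$ when $q=1$) is strictly completely monotone, and cites Borodachov--Hardin--Saff for the resulting strict positive definiteness (conditional, when $q=1$) and the uniqueness it implies. You prove that cited step by hand, through the Gaussian subordination of each summand, positivity of the Gaussian Fourier transform, the cancellation of the renormalizing constants because $\nu(K)=0$, and the parallelogram identity.

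One detail should be made explicit. You need the mutual energy $\int\!\int |x-y|^{-q}\,d\mu_1\,d\mu_2$ to be finite, both for $I(|\nu|)<\infty$ and for the parallelogram identity. This follows from the same Gaussian representation together with Tonelli, since for each $\lambda>0$ one has $2\int\!\int e^{-\lambda|x-y|^2}\,d\mu_1\,d\mu_2 \le \int\!\int e^{-\lambda|x-y|^2}\,d\mu_1\,d\mu_1+\int\!\int e^{-\lambda|x-y|^2}\,d\mu_2\,d\mu_2$.
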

\begin{proof}
First, consider the case $q>1$, in which case for $x,y \in \Rn$ the kernel simplifies to the form
\[
G_t(x,y) = \sum_{j\in \Z} \frac{1}{(|x-y|^2+(2tj)^2)^{q/2}} = f(|x-y|^2),
\]
where
\[
f(a) =  \sum_{j\in \Z} \frac{1}{(a+(2tj)^2)^{q/2}}, \qquad a\geq 0.
\]
 By \cite[Theorems 4.2.7 and 4.4.4]{BHS19} it is enough to check that $f$  is strictly completely monotone, that is $(-1)^kf^{(k)}(a)> 0$ for all $a>0,$ for each $k\geq 0$ (see \cite[Definition 2.2.4]{BHS19}). These inequalities are readily verified by direct computation of the derivatives.

In the case $q=1$, for $x,y \in\Rn$ the kernel simplifies to
\[
G_t(x,y)=\frac{1}{|x-y|}+\sum_{j\neq 0}\left(\frac{1}{(|x-y|^2+(2tj)^2)^{1/2}}-\frac{1}{|2tj|} \right) +\frac{1}{t}(\gamma-\log(4t)),
\]
which equals $f(|x-y|^2)$ where
\[
f(a) = \frac{1}{\sqrt{a}}+\sum_{j\neq 0} \left( \frac{1}{(a+(2tj)^2)^{1/2}}-\frac{1}{|2tj|}\right)+\frac{1}{t}(\gamma-\log(4t)), \qquad a \geq 0.
\]
By \cite[Theorem 4.2.7, Lemma 4.4.6, and Equation 2.2.5]{BHS19} it is sufficient to prove that $-f'$ is (strictly) completely monotone, which is straightforward to check by computing the derivatives of 
\[
-f'(a)= \sum_{j\in \Z} \frac{1}{(a+(2tj)^2)^{3/2}}.
\]

\end{proof}
\begin{lemma}[Finite energy for one strip implies finiteness and continuity for all] \label{le:finiteness}
Suppose $q \geq 1$ and $0<t_1<t_2 \leq \infty$. If $K$ is a compact subset of $S(t_1)$ then  
\[
E_K(t_1) < \infty \quad \Longleftrightarrow \quad E_K(t_2) < \infty .
\]
Furthermore, if $E_K(t_1) < \infty$ then $E_K(t)$ is continuous for all $t \in [t_1,\infty]$. 
\end{lemma}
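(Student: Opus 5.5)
The plan is to compare the kernels $G_{t}$ and $G_\infty$ on $K\times K$. Because $K$ is compact and lies in the open strip $S(t_1)$, it also lies in every $S(t)$ with $t\ge t_1$. In addition, $K$ stays a positive distance $\delta>0$ from the boundary planes $z=\pm t_1$.

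\textbf{Step 1: the kernel difference is continuous.} Write $G_t(\xh,\yh) = |\xh-\yh|^{-q} + H_t(\xh,\yh)$, where $H_t$ collects the terms with $j\neq0$ (and, when $q=1$, the renormalizing constants). For $\xh,\yh\in K$ and $j\neq 0$, the point $\rho_j(\yh)$ lies outside $S(t)$ at distance at least $\delta$ from $\overline{S(t_1)}$. Hence $|\xh-\rho_j(\yh)|\ge c\max(\delta, t|j|)$ for some $c>0$. For $q>1$ the series then converges uniformly on $K\times K\times[t_1,T]$ for every finite $T$. For $q=1$, uniform convergence follows from the expansion $1/|\xh-\rho_j(\yh)| = 1/|2tj| + O(1/j^2)$ noted after \eqref{eq:rieszkernelone}, with constants uniform for $t\in[t_1,T]$. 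The $(x,z,y,w,t)$-dependence is continuous, so $(\xh,\yh,t)\mapsto H_t(\xh,\yh)$ is continuous on $K\times K\times[t_1,\infty)$. I then extend it to $t=\infty$ with $H_\infty\equiv 0$. This requires showing $\sup_{K\times K}|H_t|\to0$ as $t\to\infty$. For $q>1$, the tail bound $\sum_{j\ne0}(ct|j|)^{-q}=O(t^{-q})$ gives this. For $q=1$ I would need an estimate of the form $\sum_{j\neq0}\bigl(|\xh-\rho_j(\yh)|^{-1}-|2tj|^{-1}\bigr)=O((\log t)/t)$ uniformly on $K$. I expect to obtain it by splitting at $|j|\approx 1/t$ scale relative to $\operatorname{diam}K$, or via a Taylor expansion in $(z\pm w)/(2tj)$ and $|x-y|/(2tj)$. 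Together with $(\gamma-\log 4t)/t\to0$, this gives the claim. The $q=1$ uniform bound is the step I expect to be the main obstacle.

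\textbf{Step 2: finiteness equivalence.} Let $M=\sup_{K\times K}|H_{t}|<\infty$ for a fixed $t\in[t_1,\infty]$. Then for every probability measure $\mu$ on $K$,
\[
\Bigl|\iint G_t\,d\mu\,d\mu-\iint |\xh-\yh|^{-q}\,d\mu\,d\mu\Bigr|\le M .
\]
Minimizing over $\mu$ gives $|E_K(t)-V_q(K)|\le M$. Thus $E_K(t)<\infty$ if and only if $V_q(K)<\infty$, and applying this at $t_1$ and at $t_2$ yields the stated equivalence.

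\textbf{Step 3: continuity.} For $s,t\in[t_1,\infty]$, the same comparison gives
\[
|E_K(s)-E_K(t)|\le \sup_{K\times K}|H_s-H_t| ,
\]
since a minimizer for one energy serves as a competitor for the other, in both directions. By Step 1, $H$ is continuous on the compact set $K\times K\times[t_1,T]$, hence uniformly continuous there, so the right side tends to $0$ as $s\to t$ for finite $t$. Near $t=\infty$, the uniform convergence $H_s\to0$ from Step 1 gives $E_K(s)\to V_q(K)=E_K(\infty)$. This proves continuity on $[t_1,\infty]$.
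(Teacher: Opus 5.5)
Your proposal is correct and follows essentially the same route as the paper. Like the paper, you split $G_t=G_\infty+H_t$, use boundedness of $H_t$ on $K\times K$ for the finiteness equivalence, and use joint continuity of $H_t$ plus uniform decay $H_t\to 0$ for continuity on $[t_1,\infty]$. Your explicit bound $|E_K(s)-E_K(t)|\le\sup_{K\times K}|H_s-H_t|$ just makes precise the paper's remark that ``small changes in the kernel'' give small changes in the energy.

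The $q=1$ estimate you flag as the main obstacle is routine. Set $a=|\xh-\rho_j(\yh)|$ and $b=|2tj|$. The reverse triangle inequality gives $|a-b|\le |x-y|+|z|+|w|\le R$ on $K$. Combined with your lower bound $a\ge c\,t|j|$, each summand satisfies $|1/a-1/b|=|a-b|/(ab)\le R/(2c\,t^2j^2)$. Hence the series is $O(1/t^2)$ uniformly on $K\times K$, which is exactly the paper's bound. The only $(\log t)/t$ contribution comes from the constant $(\gamma-\log 4t)/t$, and the same estimate also gives your uniform convergence on $K\times K\times[t_1,T]$.
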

\begin{proof}
The kernel decomposes as  
\[
G_t(\xh,\yh)=G_\infty(\xh,\yh)+H_t(\xh,\yh) 
\]
where  
\begin{equation} \label{eq:Hdef}
H_t(\xh,\yh) = 
\begin{cases}
\sum_{j\neq 0} \dfrac{1}{ |\xh-\rho_j(\yh)|^{q}} & \text{when $q>1$,} \\[1em]
\sum_{j\neq 0} \! \left( \dfrac{1}{|\xh-\rho_j(\yh)|} -\dfrac{1}{|2tj|} \right) + \dfrac{\gamma-\log (4t)}{t} & \text{when $q=1$.}
\end{cases}
\end{equation}
Recall that $\rho_j(\yh)=(y,2tj+(-1)^j w)$ depends on $t$. When $K \subset S(t)$, the energy $E_K(t)$ differs by at most a bounded amount from $E_K(\infty)$, because $H_t(\xh,\yh)$ is bounded for $\xh,\yh \in K$. Thus $E_K(t)$ is finite if and only if $E_K(\infty)$ is finite, and the first part of the lemma follows. 

Assuming the energy is finite, continuity of $E_K(t)$ for $t \in [t_1,\infty)$ is clear since $H_t(\xh,\yh)$ is jointly continuous as a function of $\xh,\yh \in K$ and $t>0$, so that small changes in $t$ induce only small changes in the kernel and hence in the energy. To justify continuity as $t \to \infty$, when $q>1$ we see $H_t(\xh,\yh)$ is bounded by $O(1/t^q) \sum_{j \neq 0} 1/|j|^q$ uniformly for $\xh,\yh \in K$. For $q=1$, the series in $H_t(\xh,\yh)$ is bounded by $O(1/t^2) \sum_{j \neq 0} 1/|j|^2$ uniformly for $\xh,\yh \in K$. The term $(\gamma-\log(4t))/t$ in $H_t(\xh,\yh)$ also tends to zero as $t \to \infty$. Thus for each $q$, it follows that $G_t(\xh,\yh)$ converges locally uniformly to $G_\infty(\xh,\yh)$. Hence $E_K(t) \to E_K(\infty)$ as $t \to \infty$. 
\end{proof}
\begin{remark*} 
We restrict attention from now on to energies with $q<n+1$, for sets in $\Rnp$, because if $q \geq n+1$ then the strip energy always equals $+\infty$, which is not interesting: indeed, $q\geq n+1>1$ implies $V_q(K)=+\infty$ by \cite[Theorems 4.3.1 and 4.3.3]{BHS19} (see also \cite[Theorem 1.1.(d)]{CL25}), and since $G_t \geq G_\infty$, we conclude $E_K(t)=+\infty$.

Similarly, for sets in $\Rn$ we will restrict attention to energies with $q<n$. 
\end{remark*}

\section{\bf Results on the Neumann strip energy}
\label{sec:energyresults}

This section states the three core results that together imply \autoref{th:main}. 
\begin{itemize}
	\item \autoref{th:energybigt}: asymptotic to third order for the Neumann strip energy as $t\to \infty$, in terms of the $q$-equilibrium measure. 
	\item \autoref{th:t-to-zero}: leading order asymptotic for the strip energy of ``nice'' sets as $t\to 0$, yielding the $(q-1)$-energy. 
	\item \autoref{pr:EKderiv}: derivative of the strip energy with respect to $t$. 
\end{itemize}
After we construct the requisite tools in the next few sections, the three results will be proved in \autoref{sec:stripenergy}. 

\smallskip
The first theorem says that the strip energy $E_K(t)$ converges as $t \to \infty$ to the whole-space Riesz energy $E_K(\infty)=V_q(K)$. That limiting value is known already from \autoref{le:finiteness}, but the theorem sharpens our understanding by identifying also the next two terms in the asymptotic formula. Those terms seem likely to be useful in resolving P\'{o}lya and Szeg\H{o}'s capacity conjecture \eqref{PS2}. 

Remember our standard notations $\xh=(x,z)$ and $\yh=(y,w)$.
\begin{theorem}[Energy asymptotic as $t \to \infty$, for $K \subset \Rnp$] \label{th:energybigt}
Fix $1\leq q <n+1$. Let $K$ be a compact set in $\Rnp$ that has finite Riesz $q$-energy. Then
\[
E_K(t)=E_K(\infty)+\frac{A_q(t)}{(2t)^q}-\frac{1}{(2t)^{q+2}}\left(B_q \, M_q(K)-C_q \left( \int_K  z \, d\mu_{\infty}(\xh)\right)^{\!2}\right)+o\! \left(\frac{1}{t^{q+2}}\right)
\]
as $t \to \infty$, where the coefficients in the expansion are 
\[
A_q(t)=
\begin{cases}
2\zeta(q) = \sum_{j\neq 0} \frac{1}{|j|^{q}} & \text{for $q>1$,} \\
2(\gamma-\log(4t)) & \text{for $q=1$,}
\end{cases}
\]
\[
 B_q=\frac{q}{2}\sum_{j\neq 0}\frac{1}{|j|^{q+2}}=q \, \zeta(q+2), \quad C_q=2q(q+1)\sum_{j\,\text{odd} }\frac{1}{|j|^{q+2}},
\]
\[
M_q(K)=\int_K\int_K(|x-y|^2-(q+1)(z-w)^2)\,d\mu_{\infty}(\xh)\,d\mu_{\infty}(\yh) ,
\]
and $\mu_{\infty}$ is the Riesz $q$-equilibrium measure of $K$.
\end{theorem}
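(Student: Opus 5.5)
The plan is to treat the strip kernel as a small, smooth perturbation of the Riesz kernel. Use the decomposition from the proof of \autoref{le:finiteness}, $G_t=G_\infty+H_t$, with $H_t$ as in \eqref{eq:Hdef}. Then expand $H_t(\xh,\yh)$ in powers of $1/t$, uniformly for $\xh,\yh\in K$. Finally, sandwich $E_K(t)$ between an upper bound that uses the trial measure $\mu_\infty$ and a lower bound that uses the strip equilibrium measure $\mu_t$.

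\emph{Step 1: kernel expansion.} Write $r=|x-y|$, $s=2tj$ and $d_j=z-(-1)^j w$. Then $d_j=z-w$ for even $j$ and $d_j=z+w$ for odd $j$, and
\[
|\xh-\rho_j(\yh)|^2=r^2+(s-d_j)^2=s^2-2sd_j+d_j^2+r^2 .
\]
Expand $(s^2-2sd_j+d_j^2+r^2)^{-q/2}$ binomially, which is valid for $|j|\ge1$ and $t$ large since $K$ is bounded. The zeroth order term is $|s|^{-q}$. Summed over $j\neq0$ it gives $A_q(t)/(2t)^q$ when $q>1$. When $q=1$, the subtracted terms $1/|2tj|$ together with $(\gamma-\log(4t))/t$ give exactly $2(\gamma-\log 4t)/(2t)$.

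The first order term $q d_j/s$ and all third order terms are odd in $s$. Since $j$ and $-j$ have the same parity, they share the same $d_j$, so these terms cancel in the symmetric sum. The second order coefficient is
\[
-\tfrac q2|s|^{-q-2}\bigl(r^2-(q+1)d_j^2\bigr).
\]
Substitute $(z+w)^2=(z-w)^2+4zw$ for odd $j$. This yields
\[
H_t=\frac{A_q(t)}{(2t)^q}-\frac{B_q}{(2t)^{q+2}}\bigl(r^2-(q+1)(z-w)^2\bigr)+\frac{C_q}{(2t)^{q+2}}\,zw+R_t ,
\]
where $R_t=O(t^{-q-4})$ uniformly on $K\times K$. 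The tails in $j$ are controlled by $\sum|j|^{-q-4}$, and each odd-order cancellation is exact pairwise. Define $Q(\xh,\yh)=B_q\bigl(r^2-(q+1)(z-w)^2\bigr)-C_q zw$, which is a continuous kernel on $K\times K$.

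\emph{Step 2: upper bound.} Test the minimization for $E_K(t)$ with $\mu_\infty$. Integrating the expansion and using $\iint zw\,d\mu_\infty d\mu_\infty=(\int z\,d\mu_\infty)^2$ gives
\[
E_K(t)\le E_K(\infty)+\frac{A_q(t)}{(2t)^q}-\frac{1}{(2t)^{q+2}}\iint Q\,d\mu_\infty d\mu_\infty+O(t^{-q-4}) ,
\]
and the double integral of $Q$ is exactly $B_qM_q(K)-C_q(\int z\,d\mu_\infty)^2$.

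\emph{Step 3: lower bound.} Let $\mu_t$ be an equilibrium measure for $E_K(t)$. Since $\int\!\int G_\infty\,d\mu_t d\mu_t\ge E_K(\infty)$, we get
\[
E_K(t)\ge E_K(\infty)+\frac{A_q(t)}{(2t)^q}-\frac{1}{(2t)^{q+2}}\iint Q\,d\mu_t d\mu_t+O(t^{-q-4}).
\]
So the remaining task is to show $\iint Q\,d\mu_t d\mu_t\to\iint Q\,d\mu_\infty d\mu_\infty$. I expect this to be the main obstacle, although it should be manageable.

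Since $H_t=A_q(t)/(2t)^q+O(t^{-q-2})$ uniformly, Step 2 combined with the lower bound gives $\iint G_\infty\,d\mu_t d\mu_t\to E_K(\infty)$. Take any weak-$*$ limit point $\nu$ of $\mu_t$. Lower semicontinuity of the Riesz energy gives $I_q(\nu)\le E_K(\infty)$, so $\nu$ is a $q$-equilibrium measure. Because $E_K(\infty)$ is finite, the minimizer is unique, hence $\nu=\mu_\infty$ and $\mu_t\to\mu_\infty$ weak-$*$. Since $Q$ is continuous on the compact set $K\times K$, the double integrals converge.

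Combining the two bounds yields the stated expansion with error $o(t^{-q-2})$. This argument does not give a rate for the error, and none is needed.
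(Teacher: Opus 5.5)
Your proposal is correct and follows the paper's proof essentially step for step: the uniform kernel expansion (\autoref{pr:larget}), the upper bound using $\mu_\infty$ as a trial measure, the lower bound using $\mu_t$ as a trial measure for $E_K(\infty)$, and weak-$*$ convergence $\mu_t \to \mu_\infty$ via lower semicontinuity and uniqueness of the Riesz $q$-equilibrium measure. The differences are minor. You observe that the third-order terms also cancel in $\pm j$ pairs, which gives a remainder $O(t^{-q-4})$ instead of the paper's $O(t^{-q-3})$. You also get $\int_K\!\int_K G_\infty\,d\mu_t\,d\mu_t \to E_K(\infty)$ from your own sandwich bounds, whereas the paper uses the continuity of $E_K$ at $t=\infty$ from \autoref{le:finiteness}.
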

Note that $A_q(t)$ depends on $t$ only in the case $q=1$. It is a constant when $q>1$. 

When $K$ is contained in the lower dimensional space $\Rn$, the $z$ and $w$ coordinates equal $0$ on $K$ and so one arrives at the following simpler asymptotic expression involving the second moment of equilibrium measure.

\begin{corollary}[Energy asymptotic as $t \to \infty$, for $K \subset \Rn$] \label{co:energybigtKinRn}
Fix $1\leq q <n$. If $K$ is a compact set in $\Rn$ that has finite Riesz $q$-energy then
\[
E_K(t)=E_K(\infty)+\frac{A_q(t)}{(2t)^q}-\frac{2B_q}{(2t)^{q+2}}\int_K|x-x_{K,\infty}|^2\,d\mu_{\infty}+o\! \left(\frac{1}{t^{q+2}}\right)
\]
as $t \to \infty$, where $x_{K,\infty}=\int_K x\, d\mu_{\infty}$ is the centroid of the $q$-equilibrium measure $\mu_{\infty}$.
 \end{corollary}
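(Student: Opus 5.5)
This corollary is a specialization of \autoref{th:energybigt}, so the plan is to apply that theorem to $K \subset \Rn \simeq \Rn\times\{0\} \subset \Rnp$ and simplify. The hypotheses need checking first. For $1\le q<n$ we certainly have $q<n+1$. The Riesz $q$-energy of $K$ is the same whether $K$ is regarded in $\Rn$ or in $\Rnp$, because the kernel $|x-y|^{-q}$ depends only on the points of $K$. Likewise the $q$-equilibrium measure $\mu_\infty$ is the same measure on $K$ in both settings, and it is unique since the energy is finite. So \autoref{th:energybigt} applies with the same $E_K(\infty)=V_q(K)$ and the same $\mu_\infty$.

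Next, simplify the correction terms. Since $z=w=0$ on $K$, the first-moment term $\int_K z\,d\mu_\infty$ vanishes, so the $C_q$ term drops out. The moment $M_q(K)$ reduces to $\int_K\int_K |x-y|^2\,d\mu_\infty(x)\,d\mu_\infty(y)$.

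The only computation is the standard variance identity for a probability measure. Expand
\[
|x-y|^2 = |x-x_{K,\infty}|^2 + |y-x_{K,\infty}|^2 - 2(x-x_{K,\infty})\cdot(y-x_{K,\infty})
\]
and integrate against $\mu_\infty\times\mu_\infty$. The cross term integrates to zero by the definition of the centroid $x_{K,\infty}=\int_K x\,d\mu_\infty$. Since $\mu_\infty(K)=1$, this gives
\[
M_q(K)=2\int_K |x-x_{K,\infty}|^2\,d\mu_\infty .
\]
All integrals are finite because $K$ is compact.

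Substituting into the expansion of \autoref{th:energybigt} turns the term $-B_q M_q(K)/(2t)^{q+2}$ into $-2B_q(2t)^{-q-2}\int_K|x-x_{K,\infty}|^2\,d\mu_\infty$. The remainder $o(t^{-q-2})$ is unchanged, which gives the stated formula.

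There is no real obstacle here. The only point needing care is confirming that the equilibrium measure and the energy are intrinsic to $K$, i.e.\ unaffected by the embedding $\Rn\hookrightarrow\Rnp$, so that the theorem's $\mu_\infty$ is the $q$-equilibrium measure of $K$ in $\Rn$.
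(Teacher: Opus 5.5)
Your proposal is correct and follows the paper's own derivation: specialize \autoref{th:energybigt} to $z=w=0$, which removes the $C_q$ term and reduces $M_q(K)$ to $\int_K\int_K|x-y|^2\,d\mu_\infty\,d\mu_\infty = 2\int_K|x-x_{K,\infty}|^2\,d\mu_\infty$ by the variance identity. Your remark that $V_q(K)$ and $\mu_\infty$ do not depend on whether $K$ is viewed in $\Rn$ or $\Rnp$ is the fact the paper uses implicitly.
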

Recently we investigated this second moment $\int_K|x-x_{K,\infty}|^2\,d\mu_{\infty}$ of equilibrium measure in the case $q=n-1$, showing for this electrostatic $q$-value in \cite[Theorem 1.2]{CL24} that among sets in $\Rn$ of given energy $E_K(\infty)$, the second moment is strictly minimal for $K$ an $n$-dimensional ball $B$. Thus if $K$ is not a ball then $E_K(t) < E_B(t)$ for all large $t$ by \autoref{co:energybigtKinRn}, which is consistent with our \autoref{co:framework}.  

The next theorem shows that for sets contained in the central hyperplane $\Rn$, the strip energy $E_K(t)$ is bounded below by the Riesz $(q-1)$-energy, up to a constant factor that we denote by 
\begin{equation} \label{cdef}
c_{q-1} = \frac{1}{2} \int_{-\infty}^\infty \frac{1}{\left( 1 + s^2 \right)^{q/2}} \, ds = \frac{\Gamma(\frac{1}{2}) \Gamma(\frac{q-1}{2})}{2\Gamma(\frac{q}{2})} , \qquad q>1 .
\end{equation}
For example, $c_1=\pi/2$ and $c_2=1$. 

In the next result, we say that a compact set $K \subset \Rn$ is \textit{interior $q$-capacitable in $\Rn$} if its capacity is fully captured by the interior, meaning that 
\[
\capq(K)=\sup\{ \capq(Q) \, : \, Q \subset K^{int} \text{ is compact} \}.
\]
For example, if $\lambda K$ is contained in the interior of $K$ for all $0<\lambda<1$ then $K$ is interior $q$-capacitable by choosing $Q=\lambda K$ and letting $\lambda$ tend to $1$. In particular, starlike and convex bodies in $\Rn$ are interior capacitable. 
\begin{theorem}[Limiting energy as $t \to 0$] \label{th:t-to-zero}
Fix $1 \leq q < n$ and let $K \subset \Rn$ be compact.

(a) For all $t>0$, 
\[
t E_K(t) \geq 
\begin{cases}
c_{q-1} V_{q-1}(K) & \text{when $q > 1$,} \\
V_{log}(K) & \text{when $q=1$.}
\end{cases}
\]

(b) If $K$ is interior $(q-1)$-capacitable in $\Rn$ then 
\[
\limsup_{t \to 0} t E_{K}(t) \leq 
\begin{cases}
c_{q-1} V_{q-1}(K) & \text{when $q >1$,} \\
V_{log}(K) & \text{when $q=1$.}
\end{cases}
\]

(c) If $K$ is interior $(q-1)$-capacitable in $\Rn$ then equality is achieved as $t\to 0$:
\[
\lim_{t \to 0} t E_{K}(t) = 
\begin{cases}
c_{q-1} V_{q-1}(K) & \text{when $q > 1$,} \\
V_{log}(K) & \text{when $q=1$.}
\end{cases}
\]
\end{theorem}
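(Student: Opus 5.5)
The plan is to work entirely with the kernel restricted to the hyperplane. For $x,y\in\Rn$ write $r=|x-y|$ and $f_r(s)=(r^2+s^2)^{-q/2}$. Then $G_t(x,y)=\sum_{j\in\Z} f_r(2tj)$ when $q>1$, and the renormalized analogue when $q=1$. The key observation is that
\[
\frac{1}{2t}\int_{-\infty}^\infty f_r(s)\,ds = \frac{c_{q-1}}{t}\, r^{1-q},
\]
using the definition \eqref{cdef} and the substitution $s=ru$. So the whole theorem reduces to comparing the lattice sum $\sum_j f_r(2tj)$ with this integral, from below for (a) and from above for (b). When $q=1$ the integral diverges, and one compares the renormalized sum instead. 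The constant $(\gamma-\log(4t))/t$ in \eqref{eq:rieszkernelone} should be exactly what makes the comparison function equal to $\frac1t\log\frac1r$.

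For (a) I would prove the pointwise bound $tG_t(x,y)\ge c_{q-1}|x-y|^{1-q}$ for $q>1$, and $tG_t(x,y)\ge \log(1/|x-y|)$ for $q=1$. Naive Riemann-sum monotonicity points the wrong way, so I would use Poisson summation:
\[
\sum_j f_r(2tj)=\frac{1}{2t}\sum_{k\in\Z}\hat f_r\!\left(\frac{\pi k}{t}\right).
\]
The $k=0$ term is the integral above. The terms with $k\ne0$ are positive, since $\hat f_r$ is a positive multiple of $|\xi|^{(q-1)/2}K_{(q-1)/2}(r|\xi|)$ and is therefore positive. For $q=1$ the same computation applies to the derivative in $r$, or equivalently to the renormalized series; the $k=0$ term then produces $\frac1t\log\frac1r$ plus a constant that the $\gamma-\log(4t)$ normalization cancels. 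I expect this is the content of \autoref{pr:twosided}. Granting the pointwise bound, integrate against the equilibrium measure $\mu_t$:
\[
tE_K(t)=\iint tG_t\,d\mu_t\,d\mu_t\ \ge\ c_{q-1}\iint |x-y|^{1-q}\,d\mu_t\,d\mu_t\ \ge\ c_{q-1}V_{q-1}(K),
\]
and likewise for the logarithmic case.

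For (b) the upper bound follows from elementary monotonicity. Since $f_r$ is even and decreasing on $[0,\infty)$, we have $2t\sum_{j\ge1}f_r(2tj)\le\int_0^\infty f_r$, which gives
\[
tG_t(x,y)\le c_{q-1}|x-y|^{1-q}+t\,|x-y|^{-q}.
\]
For $q=1$ the analogous bound is $\log(1/|x-y|)+t|x-y|^{-1}+O(t)$. The extra term $t|x-y|^{-q}$ carries the difficulty: the $(q-1)$-equilibrium measure of $K$ may have infinite $q$-energy, and this is where interior capacitability enters.

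\begin{itemize}
\item Fix a compact $Q\subset K^{int}$ and let $\nu_Q$ be its $(q-1)$-equilibrium measure.
\item Convolve $\nu_Q$ with a smooth mollifier $\varphi_\delta$, with $\delta$ less than the distance from $Q$ to $\partial K$. The result $\nu=\nu_Q*\varphi_\delta$ is a probability measure on $K$ with bounded density, so $I_q(\nu)<\infty$.
\item Mollification does not increase the $(q-1)$-energy, because the Riesz kernel is positive definite with Fourier multiplier $c|\xi|^{q-1-n}$ and $|\hat\varphi_\delta|\le1$. For the logarithmic kernel I would use the same Fourier argument, which is legitimate after subtracting measures of equal mass, or a convexity and averaging argument.
\item Test $E_K(t)$ with $\nu$ to get $tE_K(t)\le c_{q-1}I_{q-1}(\nu)+t\,I_q(\nu)\le c_{q-1}V_{q-1}(Q)+t\,I_q(\nu)$.
\item Let $t\to0$, then take the supremum of capacity over $Q$, to obtain (b).
\end{itemize}

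Part (c) is immediate from (a) and (b). I expect the main obstacle to be the $q=1$ bookkeeping. One difficulty is tracking the constant so that the Poisson remainder has the right sign and no additive constant survives. The other is justifying the energy-nonincreasing property of mollification for the logarithmic kernel, which is not positive definite on all measures.
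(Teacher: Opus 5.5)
Your proposal is correct. For $q>1$, part (a) is the paper's argument: Poisson summation plus positivity of the Bessel-$K$ transform. That pointwise bound is \autoref{pr:kernel}(d), not \autoref{pr:twosided}.

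\textbf{Part (b).} Here you take a different route.
\begin{itemize}
\item \emph{The paper's route.} It invokes the Riemann-sum error estimate of \autoref{pr:twosided}, $tG_t=c_{q-1}|x-y|^{1-q}+O(t)|x-y|^{-q}$. It mollifies with the uniform ball measure, then cites Landkof's lemma that the energies of $\eta_\e*\nu$ converge to $V_{q-1}(Q)$ (or $V_{log}(Q)$) as $\e\to0$.
\item \emph{Your kernel bound.} You obtain the one-sided bound $tG_t(x,y)\le c_{q-1}|x-y|^{1-q}+t|x-y|^{-q}$ from monotonicity of $f_r$ alone. For $q=1$ the same comparison with $\operatorname{arcsinh}$ gives exactly $tG_t\le\log\frac{1}{|x-y|}+\frac{t}{|x-y|}$, with no $O(t)$ term.
\item \emph{Your mollification step.} You replace Landkof's limit lemma by the fact that mollification does not increase energy, so one fixed $\delta$ suffices. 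This is sound. For $0<q-1<n$, use $I_{q-1}(\mu)=c\int|\hat\mu|^2|\xi|^{q-1-n}\,d\xi$ together with $|\hat\varphi_\delta|\le1$.
\item \emph{The logarithmic case.} Your averaging idea is cleanest written as $2I(\mu_a,\mu_b)=I(\mu_a)+I(\mu_b)-I(\mu_a-\mu_b)\le2I(\mu)$ for translates $\mu_a,\mu_b$. This uses conditional positive definiteness of $\log(1/|x|)$ on $\Rn$; then integrate against $\varphi_\delta(a)\varphi_\delta(b)$. Alternatively, $\log(1/|x|)$ is superharmonic on $\Rn$ for $n\ge2$, so a radial mollifier decreases it pointwise.
\end{itemize}
Your route is more self-contained and gives an explicit constant. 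The paper's route reuses \autoref{pr:twosided}, which it needs anyway for kernel decay and which also holds off the hyperplane.

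\textbf{Part (a) for $q=1$.} Differentiating in $r$ is a good way to avoid the paper's sawtooth/Parseval device. Direct Poisson summation on the renormalized series is awkward, because the summand is not integrable. Poisson summation with exponent $3$ gives $\partial_r\bigl(G_t-\frac1t\log\frac1r\bigr)=-r\sum_j(r^2+4t^2j^2)^{-3/2}+\frac{1}{tr}<0$.

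That, however, only gives monotonicity. To conclude positivity you must also show $tG_t-\log\frac1r\to0$ as $r\to\infty$, and this is exactly where the normalization $\gamma-\log(4t)$ enters. Your own Riemann comparison run in reverse supplies it: $tG_t\ge\log\frac1r+\frac tr-\operatorname{arcsinh}\frac{2t}{r}$. Together with your upper bound, this forces the limit to be $0$.
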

Part (c) follows immediately from the lower and upper bounds in parts (a) and (b). 

A hypothesis such as interior capacitability is needed for part (b), because the upper bound there is false in general. For a concrete example, suppose $K$ is a line segment in the plane ($n=2$), and recall that a line segment has zero Newtonian capacity ($q=1$) and positive logarithmic capacity ($q=0$). The segment has empty interior in $\R^2$ and so is not interior $0$-capacitable. Notice $E_K(t)=+\infty$ by \autoref{le:finiteness} because $E_K(\infty)=V_1(K)=\infty$, while $V_{log}(K)<\infty$ and so the inequality in (b) fails for the line segment. Or for a whole family of examples, suppose $1<q<n$ and $K \subset \Rn$ has Hausdorff dimension strictly between $q-1$ and $q$; then $V_q(K)=+\infty$ (by \cite[Theorem 4.3.1]{BHS19}) and so $E_K(t)=+\infty$ for all $t$, which cannot be bounded above by $c_{q-1} V_{q-1}(K)$ since that number is finite (by \cite[Theorem 4.3.3]{BHS19}).

\subsection*{Energy derivative wrt strip thickness $t$}
\label{sec:energyderiv}

Now that we understand the energy limits as $t \to \infty$ and $t \to 0$, we analyze the derivative $E_K^\prime(t)$ at a finite $t$. 
\begin{theorem}[Energy derivative] \label{pr:EKderiv}
Suppose $1 \leq q < n$ and $K \subset \Rn$ is compact. If $V_q(K)$ is finite, then $E_K(t)$ is finite and continuously differentiable for $t > 0$, with 
\[
E_K^{\, \prime}(t) = \int_K \! \int_K \frac{\partial G_t}{\partial t}(x,y) \, d\mu_t(x) d\mu_t(y)  
\]
where $\mu_t$ is the equilibrium measure for $E_K(t)$.
\end{theorem}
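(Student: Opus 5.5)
Finiteness of $E_K(t)$ for every $t>0$ is immediate from \autoref{le:finiteness}, since $E_K(\infty)=V_q(K)<\infty$. The derivative formula is a Danskin-type (envelope) result for a minimum over probability measures. The plan is to sandwich the difference quotient using the equilibrium measures at the two thicknesses. Write $I_t(\mu)=\iint G_t\,d\mu\,d\mu$ and $g_t=\partial G_t/\partial t$.

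The first step is to check that $g_t(x,y)$ exists and is jointly continuous and bounded for $x,y\in K$ and $t$ in compact subsets of $(0,\infty)$. On $\Rn\times\Rn$ the $j=0$ term $|x-y|^{-q}$ does not depend on $t$, so only $H_t$ from \eqref{eq:Hdef} contributes. For $q>1$,
\[
g_t(x,y)=-q\sum_{j\neq0}\frac{4tj^2}{(|x-y|^2+4t^2j^2)^{q/2+1}},
\]
which converges uniformly because the terms are $O(|j|^{-q})$ with $q>1$. For $q=1$ we differentiate termwise. The derivative of $1/\sqrt{a+4t^2j^2}-1/(2t|j|)$ is $-4tj^2(a+4t^2j^2)^{-3/2}+1/(2t^2|j|)$. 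This equals $O(a/(t^4|j|^3))$, so it is summable, and we add the derivative of $(\gamma-\log 4t)/t$. In both cases, $H_s-H_t=(s-t)g_t+o(|s-t|)$ uniformly on $K\times K$.

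Next we take the two one-sided bounds from minimality. For $s\ne t$,
\[
E_K(s)-E_K(t)\le I_s(\mu_t)-I_t(\mu_t)=\iint (H_s-H_t)\,d\mu_t\,d\mu_t ,
\]
and
\[
E_K(s)-E_K(t)\ge \iint (H_s-H_t)\,d\mu_s\,d\mu_s .
\]
These subtractions are legitimate because $G_\infty$ cancels and the energies are finite. Dividing by $s-t$ and using the uniform expansion gives the upper bound $\iint g_t\,d\mu_t\,d\mu_t+o(1)$ as $s\downarrow t$, and the lower bound $\iint g_t\,d\mu_s\,d\mu_s+o(1)$. For $s\uparrow t$ the roles of the inequalities swap.

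The main obstacle is to show that $\mu_s\to\mu_t$ weak-$*$ as $s\to t$. Once that holds, the bounded continuous integrand $g_t$ gives $\iint g_t\,d\mu_s\,d\mu_s\to\iint g_t\,d\mu_t\,d\mu_t$, the two one-sided derivatives agree, and the stated formula follows. I would argue by compactness. Any subsequence of $\mu_s$ has a weak-$*$ convergent sub-subsequence with limit $\nu$. Write $I_s(\mu_s)=I_\infty(\mu_s)+\iint H_s\,d\mu_s\,d\mu_s$. The second term converges to $\iint H_t\,d\nu\,d\nu$ because $H_s\to H_t$ uniformly and continuously, and $I_\infty$ is weak-$*$ lower semicontinuous. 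Together with the continuity of $E_K$ from \autoref{le:finiteness}, this gives
\[
I_t(\nu)\le\liminf_{s\to t} I_s(\mu_s)=E_K(t).
\]
Hence $\nu$ is an equilibrium measure, and $\nu=\mu_t$ by the uniqueness in \autoref{le:uniqueness}.

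The same weak-$*$ continuity of $t\mapsto\mu_t$ yields continuity of $E_K'(t)$. Indeed,
\[
\Bigl|\iint g_s\,d\mu_s\,d\mu_s-\iint g_t\,d\mu_t\,d\mu_t\Bigr|\le\sup_{K\times K}|g_s-g_t|+\Bigl|\iint g_t\,d(\mu_s\otimes\mu_s-\mu_t\otimes\mu_t)\Bigr| .
\]
The first term tends to $0$ by joint continuity of $g$, and the second tends to $0$ by weak-$*$ convergence of the product measures. Therefore $E_K$ is $C^1$ on $(0,\infty)$.
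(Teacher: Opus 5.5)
Your proposal is correct and follows the paper's proof essentially step for step. It uses the same two-sided trial-measure sandwich with $\mu_t$ and $\mu_s$, the same compactness-plus-uniqueness argument for $\mu_s\to\mu_t$ weak-$*$ (your lower semicontinuity of $I_\infty$ plus uniform convergence of $H_s$ is the paper's $\min(N,G_t)$ truncation argument in packaged form), and the same deduction that $E_K'$ is continuous. You also verify explicitly the existence, joint continuity and termwise differentiability of $\partial_t H_t$, which the paper only asserts.
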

The theorem says that one may differentiate through the energy integral $E_K(t) = \int_K \! \int_K G_t(\xh,\yh) \, d\mu_t d\mu_t$ and place the $t$-derivative onto the kernel while ignoring the $t$-dependence of the equilibrium measure $\mu_t$. This procedure is plausible since the first variation measure $\dot{\mu}_t$ (if it exists) would be a signed measure with $\dot{\mu}_t(K)=0$ (remember $\mu_t(K)=1$ for all $t$) while the equilibrium potential $\int_K G_t(\xh,\yh) \, d\mu_t(\xh)$ should be constant on the support of $\mu_t$ except perhaps for a set of capacity zero (see Landkof \cite[p.{\,}137]{L72}). That is not how we will prove the theorem, though, because we do not know whether the first variation $\dot{\mu}_t$ exists. 

Instead we view the theorem as ``differentiating through the minimum of the energy functional'', which is a well known principle in shape optimization theory dating back at least to Danskin's envelope theorem \cite[Theorem 1]{D66}. Danskin's theorem has given birth to many extensions and variants, e.g.\ \cite{C75,MS02,OT18} and \cite[Theorem 10.2.1]{DZ11}, but to our knowledge the first variant that was applicable to potential theoretic energy functionals was by Laugesen \cite[Theorem 1]{L93}. The proof  of \autoref{pr:EKderiv} in \autoref{sec:stripenergy} follows analogous lines. Such potential theoretic derivative formulas have subsequently been generalized by Pouliasis \cite[Theorem 1.1]{P21}.

\section{\bf Strip kernel: basic properties}\label{sec:kernelproperties}

This section is devoted to proving:
\begin{proposition}[Basic properties of strip kernel] \label{pr:kernel} Consider the kernel $G_t$ on the strip $S(t) \subset \Rnp$, where $t>0$ and $n, q \geq 1$. 

(a) The kernel is symmetric: 
\[
G_t(\xh,\yh) = G_t(\yh,\xh) , \qquad \xh, \yh \in S(t) .
\]

(b) Its normal derivative vanishes on the top and bottom of the strip, where $z=\pm t$: 
\[
\left. \frac{\partial G_t}{\partial z} \right|_{z=\pm t} \!\! = 0 , \qquad x , y \in \Rn, \quad w \in (-t,t) .
\]

(c) As $t \to \infty$, the kernel converges pointwise to the whole-space Riesz kernel:
\[
\lim_{t \to \infty} G_t(\xh,\yh) = G_{\infty}(\xh,\yh) = \frac{1}{|\xh-\yh|^q} .
\]

(d) If $\xh=(x,0)$ and $\yh=(y,0)$ lie in the central hyperplane $\Rn$ then $t G_t(x,y)$ is bounded below by the Riesz kernel with parameter $q-1$: 
\begin{equation} \label{eq:kernelcomparison}
t G_t(x,y) > 
\begin{cases}
c_{q-1} \dfrac{1}{|x-y|^{q-1}} , & q > 1 , \\[1em]
\log \dfrac{1}{|x-y|} , & q=1 ,
\end{cases} 
\end{equation}
for all $x,y \in \Rn, x \neq y$.
\end{proposition}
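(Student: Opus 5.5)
Parts (a)–(c) are direct manipulations of the image series \eqref{eq:rieszkernelq}–\eqref{eq:rieszkernelone}. Part (d) I would prove by Poisson summation in the vertical variable.

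\emph{Part (a).} I will check that $|\xh-\rho_j(\yh)| = |\yh-\rho_{j'}(\xh)|$ for a bijection $j\mapsto j'$ of $\Z$ with $|j'|=|j|$.
\begin{itemize}
\item For even $j$, the map $\rho_j$ is the vertical translation by $2tj$. Its inverse is $\rho_{-j}$, so $|\xh-\rho_j(\yh)|=|\rho_{-j}(\xh)-\yh|$.
\item For odd $j$, the map $\rho_j(y,w)=(y,2tj-w)$ is an isometric involution. Hence $|\xh-\rho_j(\yh)|=|\rho_j(\xh)-\yh|$.
\end{itemize}
Since the renormalizing constants $1/|2tj|$ depend only on $|j|$, the series for $G_t$ is symmetric for every $q\ge 1$.

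\emph{Part (b).} Let $\sigma(x,z)=(x,2t-z)$ be the reflection across $z=t$. A short computation gives $\sigma(\rho_j(\yh))=\rho_{1-j}(\yh)$. For example, for even $j$ we get $2t-(2tj+w)=2t(1-j)-w$, which is $\rho_{1-j}$ since $1-j$ is odd.
\begin{itemize}
\item When $q>1$, the sum over $j$ of $|\xh-\rho_j(\yh)|^{-q}$ is therefore unchanged under $\xh\mapsto\sigma(\xh)$. So $G_t(\cdot,\yh)$ is even about $z=t$, and its $z$-derivative vanishes there.
\item When $q=1$, the constants $1/|2tj|$ are not permuted consistently. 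They are independent of $z$, though, so they drop out of $\partial_z G_t$. The termwise $z$-derivatives are $O(j^{-2})$ locally uniformly, so differentiation through the series is justified and the same pairing argument applies.
\end{itemize}
The case $z=-t$ is identical, using $\sigma(x,z)=(x,-2t-z)$.

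\emph{Part (c).} I would reuse the tail bounds from the proof of \autoref{le:finiteness}, applied pointwise for fixed $\xh,\yh$. The term $H_t$ is $O(t^{-q})$ when $q>1$, and is $O(t^{-2})+(\gamma-\log 4t)/t$ when $q=1$. Both tend to $0$ as $t\to\infty$.

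\emph{Part (d), case $q>1$.} Put $r=|x-y|>0$ and $h=2t$, and let $f(s)=(r^2+s^2)^{-q/2}$. Then $G_t(x,y)=\sum_{j\in\Z} f(hj)$. Poisson summation gives
\[
h\sum_{j\in\Z} f(hj)=\sum_{k\in\Z}\hat f(k/h).
\]
The Fourier transform of $(r^2+s^2)^{-q/2}$ is a positive multiple of $|\xi|^{(q-1)/2}K_{(q-1)/2}(2\pi r|\xi|)$, which is strictly positive and decays exponentially. Dropping the terms $k\neq 0$ therefore gives the strict inequality
\[
2tG_t(x,y)>\hat f(0)=\int_\R f(s)\,ds = 2c_{q-1}r^{1-q}
\]
by \eqref{cdef}, which is the claim.

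\emph{Part (d), case $q=1$, and the main obstacle.} Here I expect to obtain
\[
tG_t(x,y)=\log\frac1r+\sum_{k\ne0}K_0\!\left(\frac{\pi r|k|}{t}\right),
\]
where the $K_0$ terms are positive. The constant $(\gamma-\log 4t)/t$ in \eqref{eq:rieszkernelone} should be exactly what makes no additive constant appear; matching that constant is the main obstacle.

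My first approach is to differentiate in $r$. The derivative $\partial_r G_t$ is a convergent series with no renormalization, so Poisson summation applies to it directly and yields
\[
\partial_r\bigl(tG_t\bigr) = -\frac1r + \partial_r\sum_{k\ne0}K_0\!\left(\frac{\pi r|k|}{t}\right).
\]
Integrating in $r$ determines $tG_t$ up to an additive constant. I would then fix that constant by letting $r\to\infty$, where the $K_0$ sum is exponentially small. This requires the asymptotic
\[
\sum_{j\ne0}\left(\frac{1}{\sqrt{r^2+(2tj)^2}}-\frac{1}{2t|j|}\right)= -\frac1t\log\frac{r}{4t}-\frac{\gamma}{t}+o(1),
\]
which follows from comparing the sum with an integral via Euler--Maclaurin, together with the harmonic-sum asymptotic $\sum_{j\le N}1/j=\log N+\gamma+o(1)$.

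As a fallback, I would obtain the $q=1$ identity as the limit $q\to1^+$ of the $q>1$ identity after subtracting $2\zeta(q)/(2t)^q$ from both sides.
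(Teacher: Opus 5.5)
Your proof is correct. Parts (a)--(c), and (d) for $q>1$, follow the paper. The pairing $j\leftrightarrow 1-j$ induced by your reflection $\sigma$ is exactly the cancellation the paper uses for the Neumann condition. Your Poisson summation with step $2t$ is the paper's ``Riemann sum exceeds integral'' argument before rescaling.

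For (d) with $q=1$ you take a genuinely different route. The paper never writes a Bessel series. It proves the exact identity
\[
2tG_t(x,y)-2\log\frac{1}{|x-y|}=\sum_{j\in\Z}\Bigl(h(j)-\int_{j-1/2}^{j+1/2}h(s)\,ds\Bigr),\qquad h(s)=\frac{\tau}{(1+(s\tau)^2)^{1/2}},\quad \tau=\frac{2t}{|x-y|}.
\]
It fixes the constant at each fixed $r$ using an explicit antiderivative of $h(s)-1/s$ and the definition of $\gamma$. It then shows this midpoint-rule error is positive: it writes the error as $\int_{-1/2}^{1/2}f(\xi)\sum_j h'(\xi+j)\,d\xi$ with a sawtooth $f$ and applies Parseval, which reduces everything to $\sum_{\ell\neq0}\widehat h(\ell)>0$. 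Since $\widehat h(\ell)=2K_0(\pi r|\ell|/t)$, that final quantity is exactly twice your series, so both proofs arrive at the same identity $tG_t=\log(1/r)+\sum_{k\neq0}K_0(\pi r|k|/t)$.

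What your route buys:
\begin{itemize}
\item Differentiating in $r$ removes the renormalization, so Poisson summation applies to an absolutely convergent series, just as for $q>1$.
\item It produces the closed-form Bessel series directly, which also shows that $tG_t-\log(1/r)$ is exponentially small as $r\to\infty$.
\end{itemize}
The price is a constant of integration, which you must fix via the large-$r$ Euler--Maclaurin asymptotic. That asymptotic is correct as you state it, so the constant comes out to zero.

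What the paper's route buys: it needs no limit in $r$. Its constant computation (the vanishing of \eqref{eq:equalszero}) is also reused in the proof of the two-sided kernel estimate. There the heights $z,w$ are arbitrary, and an exact Bessel formula would be less convenient than the midpoint-sum formulation.
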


The limiting case of thick strips in parts (c) and the lower bound for points in the central hyperplane in part (d) are central to our goal of showing that the family of strip kernels interpolates between Riesz kernels with exponents $q-1$ and $q$. 
\begin{proof}[Proof of \autoref{pr:kernel}]
Part (a). The definition of the transformation $\rho_j$ implies 
\begin{align*}
\xh-\rho_j(\yh) 
& = (x-y,z-2tj-(-1)^j w) \\
& = -
\begin{cases}
(y-x,w-2t(-j)-(-1)^j z) & \text{if $j$ is even,} \\
(y-x,-w+2tj+(-1)^j z) & \text{if $j$ is odd.}
\end{cases}
\end{align*}
Symmetry of the kernel in part (a) now follows from the observation that  
\[
|\xh-\rho_j(\yh)| 
= 
\begin{cases}
|\yh-\rho_{-j}(\xh)| & \text{if $j$ is even,} \\
|\yh-\rho_j(\xh)| & \text{if $j$ is odd.} 
\end{cases}
\]

\smallskip
Part (b). The normal derivative at the upper boundary of the strip is 
\[
\left. \frac{\partial\ }{\partial z} G_t(\xh,\yh) \right|_{z=t} = q \sum_{j \in \Z} \frac{(2j-1)t+(-1)^j w}{| (x-y,(2j-1)t+(-1)^j w) |^{q+2}} .
\]
The terms in the series cancel in pairs, by considering $j=0,1$, then $j=-1,2$, and $j=-2,3$, and so on. An analogous calculation works at the lower boundary, where $z=-t$. 

\smallskip
Part (c). This pointwise convergence of the kernel as $t \to \infty$ was established already in the proof of \autoref{le:finiteness}. In fact, that proof shows the convergence is locally uniform.

\smallskip
Part (d) for $q > 1$. By substituting $z=w=0$ into the definition \eqref{eq:rieszkernelq} of $G_t$, we find for $x \neq y$ that 
\begin{align*}
t G_t(x,y) 
& = \sum_{j \in \Z} \frac{t}{\left( |x-y|^2 + (2tj)^2 \right)^{q/2}} \notag \\
& = \frac{1}{2} \sum_{j \in \Z} \frac{2t/|x-y|}{\left( 1 + (2tj/|x-y|)^2 \right)^{q/2}} \cdot \frac{1}{|x-y|^{q-1}} . 
\end{align*}
Thus to prove the kernel comparison \eqref{eq:kernelcomparison}, we want to show 
\begin{equation} \label{eq:desired}
\sum_{j \in \Z} \frac{\tau}{\left( 1 + (j\tau)^2 \right)^{q/2}} > \int_\R \frac{ds}{\left( 1 + s^2 \right)^{q/2}} 
\end{equation}
where 
\[
\tau= \frac{2t}{|x-y|} .
\]
This inequality \eqref{eq:desired} claims that a Riemann sum with step size $\tau$ exceeds its corresponding integral. 

Let 
\[
g(s) = \frac{1}{(1+s^2)^{1/2}}
\]
and 
\begin{equation} \label{eq:his}
h(s)=g(s\tau) \tau = \frac{\tau}{(1+(s\tau)^2)^{1/2}} ,
\end{equation}
so that $h(s)$ decays like $1/|s|$ at infinity. The desired inequality \eqref{eq:desired} is equivalent to 
\[
\sum_{j \in \Z} h(j)^q > \int_\R h(s)^q \, ds .
\]
The Poisson summation formula is applicable to $h^q$ because $h(s)^q \sim 1/|s|^q$ as $|s| \to \infty$ (note $q > 1$) and the Fourier transform of $h^q$ decays rapidly due to smoothness of $h^q$. After applying Poisson summation to the left side of the last inequality (using the Fourier transform with $2\pi$ in the exponential), the task becomes to show 
\[
\sum_{m \in \Z} \widehat{h^q}(m) > \widehat{h^q}(0) .
\]

The term with $m = 0$ on the left cancels the sole term on the right, and so it suffices to show $\widehat{h^q}(\zeta)$ is positive when $\zeta \neq 0$. We verify this for $q \geq 1$. Note that $g^q$ has Fourier transform
\[
\widehat{g^q}(\sigma) = \frac{2\pi^{q/2}|\sigma|^{(q-1)/2}}{\Gamma(q/2)}K_{(q-1)/2}(2\pi |\sigma|) 
\]
by \cite[eq.{\,}10.32.11]{DLMF}, where $K_{(q-1)/2}$ is the $\frac{q-1}{2}$-th modified Bessel function of the second kind. It is known that $K_{(q-1)/2}(z)>0$ whenever $z > 0$; see \cite[eq.{\,}10.32.9]{DLMF}.  Hence $\widehat{h^q}(\zeta)>0$ whenever $\zeta \neq 0$, as we wanted to show.

\emph{Aside:} Alternatively, one can show positivity of the Fourier transform of $g^q$ as follows. Clearly, $g(s)^q=(1+s^2)^{-q/2}$ is a completely monotone function of $s^2$. Bernstein’s characterization of completely monotone functions in terms of exponentials \cite[eq.\,2.2.5]{BHS19} yields that $g(s)^q=\int_0^{\infty} e^{-s^2a} d\eta(a)$, for some measure $\eta$. Positivity of the Fourier transform quickly follows, since the transform of a Gaussian is again Gaussian.  

\smallskip
Part (d) for $q=1$. Again take $z=w=0$ and $x,y \in \Rn, x \neq y$. We will show that 
\begin{equation} \label{q1kernelformula}
2t G_t(x,y) - 2 \log \frac{1}{|x-y|} = \sum_{j \in \Z} \left( h(j) - \int_{j-1/2}^{j+1/2} h(s) \, ds \right)
\end{equation}
and that the right side of \eqref{q1kernelformula} is positive:  
\begin{equation} \label{riemann1}
\sum_{j \in \Z} \left( h(j) - \int_{j-1/2}^{j+1/2} h(s) \, ds \right) > 0 .
\end{equation}
Part (d) of the proposition then follows immediately. 

First we prove \eqref{q1kernelformula}. By rewriting the definition \eqref{eq:rieszkernelone} of the kernel in terms of the function $h$ and the quantity $\tau= 2t/|x-y|$, similar to our work above for $q>1$, one finds that the difference of the left side of \eqref{q1kernelformula} minus the right side equals
\begin{equation} \label{eq:equalszero}
\int_{-1/2}^{1/2} h(s) \, ds+ \sum_{j \neq 0} \left( \int_{j-1/2}^{j+1/2} h(s) \, ds - \frac{1}{|j|} \right) + 2(\gamma - \log 2\tau) .
\end{equation}
We will show this expression equals $0$. Begin by dividing it by $2$ and using evenness of $h$. The resulting expression is then equivalent to 
\[
\sum_{j=1}^\infty \left( \int_{j-1/2}^{j+1/2} \frac{1}{s} \, ds - \frac{1}{j} \right) + \gamma - \log 2 ,
\]
as one can check by taking the difference of the two expressions and evaluating the integrals explicitly, noting that $h(s)-1/s$ is integrable for large $s$ and has antiderivative $\log(\tau+\sqrt{s^{-2}+\tau^2})$. Since $\log 2 = \int_{1/2}^1 1/s \, ds$, the last line becomes 
\[
\lim_{k \to \infty} \left( \int_1^{k+1/2} \frac{1}{s} \, ds - \sum_{j=1}^k \frac{1}{j} \right) + \gamma ,
\]
which equals zero by definition of the Euler--Mascheroni constant $\gamma$. Thus expression \eqref{eq:equalszero} equals zero and so \eqref{q1kernelformula} is proved. 

For the positivity estimate \eqref{riemann1}, we note the left side equals 
\begin{align}
\sum_{j=-\infty}^\infty \int_{j-1/2}^{j+1/2} \int_s^j h^\prime(\xi) \, d\xi ds \notag 
& = \sum_{j=-\infty}^\infty \int_{j-1/2}^{j+1/2} f(\xi-j) h^\prime(\xi) \, d\xi \notag \\
& = \int_{-1/2}^{1/2} f(\xi) \sum_{j=-\infty}^\infty h^\prime(\xi+j) \, d\xi \label{riemann3}
\end{align}
where $f$ is the sawtooth function 
\[
f(\xi) = 
\begin{cases}
\xi + 1/2 , & -1/2 < \xi <0 , \\
\xi - 1/2, & 0 < \xi < 1/2 .
\end{cases}
\]
Parseval's identity for $1$-periodic functions reduces expression \eqref{riemann3} to $\sum_\ell \overline{\widehat{f}(\ell)} \widehat{h^\prime}(\ell)$, where we used that by periodization the $\ell$-th Fourier coefficient of the $1$-periodic function $\sum_j h^\prime(\xi+j)$ equals the Fourier transform $h^\prime(\ell)$. Thus to prove \eqref{riemann1}, we want to show the inequality 
\[
\sum_{\ell \neq 0} \overline{\widehat{f}(\ell)} \, (2\pi i \ell \, \widehat{h}(\ell)) > 0 . 
\]
Direct calculation of the Fourier coefficients of $f$ gives that $\overline{\widehat{f}(\ell)} 2\pi i \ell=1$ when $\ell \neq 0$, and so the desired inequality becomes $\sum_{\ell \neq 0} \widehat{h}(\ell) > 0$. This positivity indeed holds true because, as observed earlier, the transform $\widehat{h}$ is positive. That completes the proof of \eqref{riemann1} and hence of \autoref{pr:kernel}(d) when $q=1$. 
\end{proof}

\section{\bf Strip kernel: two-sided estimates} \label{sec:twosided}

The previous section developed a one-sided estimate on the strip kernel, that is, a lower bound in terms of the Riesz kernel with exponent $q-1$. Two-sided estimates on the strip kernel are provided by the next result. The price to be paid is that the error term now behaves like a Riesz kernel with the larger exponent $q$. Hence the estimate is useful only when $|x-y|$ is large. 
\begin{proposition}[Two-sided bounds in terms of horizontal $(q-1)$-Riesz kernel] \label{pr:twosided} 
Let $n, q \geq 1$. The kernel $G_t$ satisfies  
\begin{equation} \label{eq:Gttwosided}
G_t(\xh,\yh) = 
\begin{cases}
\dfrac{c_{q-1}}{t} \dfrac{1}{|x-y|^{q-1}} + O \! \left( \dfrac{1}{|x-y|^q} \right) , & q>1 , \\[1em]
\dfrac{1}{t} \log \dfrac{1}{|x-y|} + O \! \left( \dfrac{1}{|x-y|} \right) , & q=1 ,
\end{cases}
\end{equation}
and the horizontal and vertical components of its gradient satisfy 
\[
\nabla_{\! x} G_t(\xh,\yh) + \frac{q c_{q+1}}{t} \frac{x-y \quad }{|x-y|^{q+1}} = O\!\left( \frac{1}{|x-y|^{q+1}}\right), \qquad 
\frac{\partial\ }{\partial z} G_t(\xh,\yh)=O\!\left( \frac{1}{|x-y|^{q+1}} \right) ,
\]
for all $\xh=(x,z)$ and $\yh=(y,w)$ in $\overline{S(t)}$ with $x \neq y$ and all $t>0$. 
\end{proposition}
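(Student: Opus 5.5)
The plan is to treat the kernel as a Riemann sum in the vertical variable and compare it with the corresponding integral via Poisson summation, uniformly in the vertical positions $z,w\in[-t,t]$.

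\textbf{Setup.} Write $r=|x-y|>0$. The vertical offsets are $z-2tj-(-1)^jw$. Split the sum over even $j=2k$ and odd $j=2k+1$. For even $j$ the offsets are $(z-w)-4tk$; for odd $j$ they are $(z+w)-2t-4tk$. So
\[
G_t(\xh,\yh)=\sum_{k\in\Z}\phi\bigl(a-4tk\bigr)+\sum_{k\in\Z}\phi\bigl(b-4tk\bigr),
\qquad \phi(s)=(r^2+s^2)^{-q/2},
\]
where $a=z-w$ and $b=z+w-2t$. Each sum is a periodization with period $4t$. Poisson summation gives
\[
\sum_k\phi(a-4tk)=\frac{1}{4t}\sum_{m}\widehat\phi\!\left(\frac{m}{4t}\right)e^{2\pi i m a/(4t)} .
\]
The $m=0$ terms of the two sums together give $\frac{2}{4t}\int_\R\phi=\frac{1}{2t}\,r^{1-q}\int(1+s^2)^{-q/2}ds=\frac{c_{q-1}}{t}r^{1-q}$, which is exactly the main term.

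\textbf{Bounding the remainder.} The key step is to bound the $m\neq0$ terms by $O(r^{-q})$ uniformly in $t$, $a$, $b$. By scaling, $\widehat\phi(\sigma)=r^{1-q}\widehat{g^q}(r\sigma)$, where $\widehat{g^q}$ is the positive Bessel expression quoted in the proof of \autoref{pr:kernel}(d). We need $\widehat{g^q}$ bounded and monotone decreasing (which follows from the Bernstein/Gaussian-mixture aside) and exponentially decaying. Using the trivial bound $|e^{i\cdot}|\le1$, the remainder is at most
\[
\frac{2}{4t}\sum_{m\neq0}r^{1-q}\,\widehat{g^q}\!\left(\frac{rm}{4t}\right)\le 2r^{-q}\int_0^\infty\widehat{g^q}(u)\,du\cdot 2 ,
\]
since a monotone Riemann sum with step $r/(4t)$ is dominated by the integral (no extra term appears because the sum starts at $m=1$). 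The integral $\int_0^\infty\widehat{g^q}=\frac12 g^q(0)=\frac12$ is finite. This gives $O(r^{-q})$ with a constant independent of $t$, as required.

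\textbf{The case $q=1$.} Here Poisson summation does not apply directly. Instead use the derivative in $r$, or equivalently apply the argument to $\phi(s)-(s^2+c)^{-1/2}$-type differences. Simpler: apply Poisson summation to $\partial_r$ of each term. The function $r\mapsto G_t$ then has $r$-derivative $-\frac{1}{t}\frac1r+O(r^{-2})$ by the $q=3$ computation with the factor $-r$ (this is the gradient estimate below). Integrate in $r$ from $r$ to $\infty$, and identify the constant using the normalization fact recorded after \eqref{eq:rieszkernelone} that there is no additive constant; concretely, use identity \eqref{q1kernelformula} for the $z=w=0$ case. To reach general $z,w$, bound the difference $G_t(\xh,\yh)-G_t(x,y)$: it equals the sum of the $m\neq0$ Fourier modes times the phase differences, again $O(1/r)$. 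This constant bookkeeping is the main obstacle; I would handle it by redoing \eqref{q1kernelformula} with shifted sample points $a-4tk$, $b-4tk$, where the Euler--Mascheroni normalization still cancels the divergent part.

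\textbf{Gradient estimates.} Differentiate termwise, which is justified by local uniform convergence. $\nabla_x$ applied to each term gives $-q(x-y)(r^2+s^2)^{-(q+2)/2}$, which is a periodization of $(x-y)$ times the kernel with exponent $q+2$. The main-term computation above with $q$ replaced by $q+2$ gives $\frac{c_{q+1}}{t}r^{-(q+1)}$, and the remainder is $O(r^{-(q+2)})$; multiplying by $q|x-y|$ yields the claim. For $\partial_z$, the derivative of the $m=0$ Fourier term is zero because it does not depend on $a$ or $b$. Differentiating the phases $e^{2\pi i m a/(4t)}$ brings down a factor $2\pi m/(4t)$. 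Absorbing this into the same Riemann-sum argument with $u\,\widehat{g^q}(u)$, still integrable and eventually monotone, gives $O(r^{-q}\cdot r^{-1})=O(r^{-(q+1)})$. The same approach, with sawtooth/Parseval or direct differentiation, also settles the gradient bounds for $q=1$, where no renormalization issue arises after differentiation.
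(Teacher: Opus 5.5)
Your route differs from the paper's, and it works for $q>1$ and for both gradient bounds. The paper does not use Poisson summation here. It writes $2t\,|x-y|^{q-1}G_t=\sum_j g(j\tau+\delta_j)^q\tau$, with $\tau=2t/|x-y|$ and shifts $|\delta_j|\le\tau$. It then bounds each cell's discrepancy against $\int_{j\tau}^{(j+1)\tau}g^q$ by the variation of $g^q$ on $[(j-1)\tau,(j+1)\tau]$, for a total error of $2\tau\|(g^q)'\|_{L^1}$. For $q=1$ it first uses the already-proved identity \eqref{eq:equalszero} to rewrite $2tG_t-2\log\frac{1}{|x-y|}$ as $\sum_j\int_{(j-1/2)\tau}^{(j+1/2)\tau}\bigl(g(j\tau+\delta_j)-g(s)\bigr)\,ds$, and the same variation bound applies verbatim. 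The vertical derivative is handled by pairing $j$ with $-j$ and using the variation of $s\,g(s)^{q+2}$. That argument is elementary and treats $q=1$ and $q>1$ identically.

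Your splitting into two lattices of period $4t$, together with positivity and monotonicity of $\widehat{g^q}$, gives explicit constants, e.g. $|G_t-\frac{c_{q-1}}{t}r^{1-q}|\le 2r^{-q}$. Through the exponential decay of $\widehat{g^q}$ it also shows the error is exponentially small relative to the main term when $r\gg t$. The price is the Fourier machinery, and the method fails at $q=1$, where $\widehat\phi(0)=\infty$. One small point in your $\partial_z$ bound: $F(u)=u\,\widehat{g^q}(u)$ is not monotone near $0$. ``Integrable and eventually monotone'' alone leaves an error of size $r^{-q}/t$. To get $\sum_{m\ge1}F(mh)\lesssim 1/h$ uniformly in $h=r/4t$, you need boundedness of $F$ when $h\le 1$ and its fast decay when $h\ge1$.

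Your $q=1$ kernel estimate, however, is not yet a proof.
\begin{itemize}
\item The remark after \eqref{eq:rieszkernelone} about ``no additive constant'' is a forward reference to this very proposition, so you cannot invoke it.
\item Integrating $\partial_rG_t$ from $r$ to $\infty$ diverges. Integrating $\partial_r\bigl(G_t-\frac1t\log\frac1r\bigr)=O(r^{-2})$ only gives $G_t-\frac1t\log\frac1r=L(z,w,t)+O(1/r)$, with the limit $L$ still unidentified.
\item Using \eqref{q1kernelformula} at $z=w=0$ requires showing its right side tends to $0$ as $\tau\to0$, which you do not do.
\end{itemize}
The cleanest fix uses your own tools. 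The Parseval step in the proof of \autoref{pr:kernel}(d) shows that right side equals $\sum_{\ell\neq0}\widehat{h}(\ell)=\sum_{\ell\neq0}\widehat{g}(\ell/\tau)$. Monotonicity of $\widehat g$ then gives $0<\sum_{\ell\neq0}\widehat g(\ell/\tau)\le 2\tau\int_0^\infty\widehat g=\tau$. That is, $0<G_t(x,y)-\frac1t\log\frac{1}{|x-y|}\le\frac{1}{|x-y|}$ directly, with no $r$-integration needed.

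Your difference argument then extends this to general $z,w$. Poisson summation applies to the integrable function $\phi(\cdot-a)-\phi$, whose transform $(e^{-2\pi i a\sigma}-1)\widehat\phi(\sigma)$ vanishes at $\sigma=0$, and the same monotone Riemann-sum bound gives $O(1/r)$ uniformly in $t$. Your alternative suggestion, redoing \eqref{q1kernelformula} with shifted sample points, is exactly the paper's \eqref{eq:q1errorestimate}.
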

The constants implicit in the $O(\cdot)$ estimates depend on $q$, but are independent of $n,\xh,\yh,t$. In the horizontal gradient estimate the constant can be rewritten as   
\[
q c_{q+1} = 
\begin{cases}
(q-1) c_{q-1}, & q>1 , \\
1 , & q=1 ,
\end{cases}
\]
by definition \eqref{cdef} and the functional equation for the gamma function. 
\begin{proof}[Proof of \autoref{pr:twosided}]  

When $q>1$, the kernel definition \eqref{eq:rieszkernelq} implies that
\[
2t G_t(\xh,\yh) = \sum_{j \in \Z} g \left( j\tau + \delta_j \right)^q \tau \cdot \frac{1}{|x-y|^{q-1}}
\]
where $g(s) = 1/(1+s^2)^{1/2}$ and $\tau=2t/|x-y|$ and  
\[
\delta_j = \frac{(-1)^j w-z}{|x-y|} .
\] 
Notice $|\delta_j| \leq \tau$, since $w,z \in [-t,t]$

Thus to prove the kernel bound \eqref{eq:Gttwosided} when $q>1$ we want to show a Riemann sum approximation
\[
\sum_{j \in \Z} g \left( j\tau + \delta_j \right)^q \tau = 2c_{q-1} + O(\tau) 
\]
whenever $|\delta_j| \leq \tau$, with the error term satisfying $|O(\tau)| \leq (\text{const.}) \tau$ for some constant that depends only on $q$. Using the definition of $c_{q-1}$ from \eqref{cdef}, we see that 
\begin{align*}
\left| \sum_{j \in \Z} g \left( j\tau + \delta_j \right)^q \tau - 2c_{q-1} \right|
& = \left| \sum_{j \in \Z} \int_{j\tau}^{(j+1)\tau} \left( g( j\tau + \delta_j)^q - g(s)^q \right) ds \right| \\
& \leq \sum_{j \in \Z} \int_{j\tau}^{(j+1)\tau} \int_{(j-1)\tau}^{(j+1)\tau} \left| (g^q)^\prime(\xi) \right| \, d\xi ds \\
& \qquad  \text{ since } s \text{ and } j\tau + \delta_j \text{ lie between } (j\pm1)\tau\\
& = 2\tau \lVert (g^q)^\prime \rVert_{L^1} = O(\tau) ,
\end{align*}
as wanted.

Now suppose $q=1$. The kernel definition \eqref{eq:rieszkernelone} implies that 
\begin{align}
2t G_t(\xh,\yh) - 2 \log \frac{1}{|x-y|} 
& = g(\delta_0) \tau + \sum_{j \neq 0} \left( g \left( j\tau + \delta_j \right) \tau - \frac{1}{|j|} \right) + 2(\gamma - \log 2\tau) \notag \\
& = \sum_{j \in \Z} \int_{(j-1/2)\tau}^{(j+1/2)\tau} \left( g \left( j\tau + \delta_j \right) - g(s) \right) ds \label{eq:q1errorestimate}
\end{align}
by substituting for $2(\gamma - \log 2\tau)$ from expression \eqref{eq:equalszero} (which was shown to equal zero) and then changing variable in the integrals for the function $h(s)=g(s\tau)\tau$. That the quantity in \eqref{eq:q1errorestimate} is bounded by $2\tau \lVert g^\prime \rVert_{L^1}$ is proved as above for the case $q>1$, noting that $g^\prime$ is indeed integrable on $\R$.

For the horizontal gradient bound in the proposition, observe that for $q \geq 1$, differentiating the kernel definition \eqref{eq:rieszkernelq} gives
\[
\nabla_{\! x} G_{t,q}(\xh,\yh) = -q (x-y) G_{t,q+2}(\xh,\yh)
\] 
where the additional subscripts on the $G_t$'s indicate the $q$-value to be used in each kernel. On the right side, simply substitute for $G_{t,q+2}$ the estimate \eqref{eq:Gttwosided} that was proved already, with $q$ there replaced by $q+2$. 

For the vertical gradient bound, we find 
\begin{align*}
& \frac{\partial\ }{\partial z} G_t(\xh,\yh) \\
& = q \sum_{j \in \Z} \frac{2tj+(-1)^jw-z}{|\xh-\rho_j(\yh)|^{q+2}} \\
& = \frac{q}{|x-y|^{q+1}} \sum_{j \in \Z} (j\tau+\delta_j) g(j\tau+\delta_j)^{q+2} \\
& = \frac{q}{|x-y|^{q+1}} \left( \delta_0 \, g(\delta_0)^{q+2} + \sum_{j > 0} \left[ (j\tau+\delta_j) g(j\tau+\delta_j)^{q+2} - (j\tau-\delta_j) g(j\tau-\delta_j)^{q+2} \right] \right) 
\end{align*}
by combining the sums over positive and negative $j$-values and using evenness of $g$, noting that $-j\tau+\delta_{-j}=-(j\tau-\delta_j)$ because $\delta_{-j}=\delta_j$. Since $|\delta_j| \leq \tau$, it follows that 
\begin{align*}
\left| \frac{\partial\ }{\partial z} G_t(\xh,\yh) \right| 
& \leq \frac{q}{|x-y|^{q+1}} \left( \int_0^\tau |(sg(s)^{q+2})^\prime| \, ds + \sum_{j > 0} \int_{(j-1)\tau}^{(j+1)\tau} |(sg(s)^{q+2})^\prime| \, ds \right) \\
& = \frac{2q}{|x-y|^{q+1}} \int_0^\infty |(sg(s)^{q+2})^\prime| \, ds = O\!\left( \frac{1}{|x-y|^{q+1}} \right) .
\end{align*}
\end{proof}

Now consider a strip of fixed thickness. The decay of the kernel as $\xh$ approaches infinity can be deduced as follows, showing that the original decay exponent $q$ in $\Rnp$ gets reduced to $q-1$ in the strip. 
\begin{corollary}[Kernel decay at spatial infinity] \label{co:kerneldecay} Let $n \geq 1$. Consider the kernel $G_t$ on the strip $S(t)$, where $q \geq 1$ and $t>0$ are fixed. Suppose $z$ is confined to $[-t,t]$ and $\yh$ is confined to a compact subset $K \subset S(t)$. 

Then as $r=|x| \to \infty$, the kernel decays according to  
\[
G_t(\xh,\yh) =
\begin{cases}
\dfrac{c_{q-1}}{t} \dfrac{1}{r^{q-1}} + O \! \left( \dfrac{1}{r^q} \right) & \text{for $q>1$,} \\[1em]
 \dfrac{1}{t} \log \dfrac{1}{r} + O \! \left( \dfrac{1}{r} \right) & \text{for $q=1$,} 
\end{cases}
\]
and the horizontal and vertical components of the gradient decay according to 
\[
\nabla_{\! x} G_t(\xh,\yh) + \frac{q c_{q+1}}{tr^{q}} \, \vec{e}_r = O\!\left( \frac{1}{r^{q+1}}\right), \qquad 
\frac{\partial\ }{\partial z} G_t(\xh,\yh)=O\!\left( \frac{1}{r^{q+1}} \right) ,
\]
where $\vec{e}_r=x/r$ is the horizontal radial unit vector. The estimates are uniform with respect to $z \in [-t,t]$ and $\yh \in K$.
\end{corollary}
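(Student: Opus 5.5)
The plan is to deduce the corollary from \autoref{pr:twosided}, replacing $|x-y|$ by $r=|x|$ and controlling the errors uniformly for $\yh=(y,w)\in K$. Since $K$ is compact, fix $R$ with $|y|\le R$ for all $\yh\in K$. For $r\ge 2R$ we have $r/2\le |x-y|\le 3r/2$, and more precisely
\[
|x-y| = r + O(1), \qquad \Bigl|\frac{x-y}{|x-y|}-\frac{x}{r}\Bigr| = O\!\left(\frac1r\right),
\]
with constants depending only on $R$. The $O(\cdot)$ constants in \autoref{pr:twosided} depend only on $q$. They are also valid for every $z\in[-t,t]$ and every $w$, because the proposition holds on all of $\overline{S(t)}$. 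So it is enough to replace the leading terms evaluated at $|x-y|$ by the corresponding terms at $r$, and to absorb the differences into the error.

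For the kernel when $q>1$, I would use the mean value theorem:
\[
\bigl| |x-y|^{-(q-1)} - r^{-(q-1)} \bigr| \le (q-1)\, \bigl||x-y|-r\bigr| \,\max(r/2,\ 3r/2)^{-q}\cdot\text{const} = O(r^{-q}).
\]
Multiplied by $c_{q-1}/t$, with $t$ fixed, this is $O(r^{-q})$. The proposition's error term $O(|x-y|^{-q})$ is also $O(r^{-q})$ because $|x-y|\ge r/2$.

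When $q=1$, the same reasoning gives
\[
\log\frac{1}{|x-y|} - \log\frac1r = -\log\frac{|x-y|}{r} = O\!\left(\frac1r\right),
\]
since $|x-y|/r = 1+O(1/r)$. Then $\frac1t\cdot O(1/r)$ is $O(1/r)$ for fixed $t$.

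For the horizontal gradient, I would write
\[
\frac{x-y}{|x-y|^{q+1}} = \frac{1}{|x-y|^{q}}\cdot\frac{x-y}{|x-y|}.
\]
The scalar factor satisfies $|x-y|^{-q} = r^{-q}+O(r^{-q-1})$ by the mean value theorem, exactly as above. The unit vector satisfies $\frac{x-y}{|x-y|} = \vec e_r + O(1/r)$. Their product is therefore $r^{-q}\vec e_r + O(r^{-q-1})$. Combined with the horizontal gradient estimate of \autoref{pr:twosided} and its error term $O(|x-y|^{-q-1}) = O(r^{-q-1})$, this gives the stated gradient asymptotic. The vertical derivative bound follows directly from $|x-y|\ge r/2$.

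I do not expect a real obstacle here. The only point that needs care is tracking uniformity. The implied constants depend on $q$, on $t$ (through the factor $1/t$ multiplying the $O(1/r)$ corrections), and on $R$, but not on $z$ or $\yh\in K$. The finitely many $r<2R$ are irrelevant for an asymptotic statement as $r\to\infty$.
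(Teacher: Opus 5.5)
Your proposal is correct and takes the same route as the paper, which simply says the corollary follows readily from \autoref{pr:twosided}; you have supplied the routine details (for $|y|\le R$ and $r\ge 2R$, the comparisons $|x-y|=r+O(1)$ and $\frac{x-y}{|x-y|}=\vec e_r+O(1/r)$, plus the mean value theorem) and tracked uniformity in $z$ and $\yh\in K$ correctly. One small slip: in your mean value estimate the intermediate point $\xi$ satisfies $\xi\ge r/2$, so the factor should be $(r/2)^{-q}$ rather than $\max(r/2,3r/2)^{-q}$, but your ``const'' absorbs the difference and the conclusion $O(r^{-q})$ stands.
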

The corollary follows readily from \autoref{pr:twosided}. The constants implicit in the $O(\cdot)$ estimates may depend on $q,t$ and $K$, and the required largeness of $r$ can depend on the set $K$ to which $\yh$ is confined.

\section{\bf Strip kernel: improved asymptotic as $t \to \infty$} \label{sec:improved}

Next we seek finer control of the kernel as the strip thickness tends to infinity, improving on \autoref{pr:kernel}(c). 
\begin{proposition}[Improved asymptotic of the thick strip kernel, $t \to \infty$] \label{pr:larget} Fix $n \geq 1$ and $q \geq 1$. As $t \to \infty$, the kernel $G_t$ on the strip $S(t)$ converges pointwise to the whole-space Riesz kernel, with remainder terms as follows:
\[
G_t(\xh,\yh) = \frac{1}{|\xh-\yh|^q} + \frac{A_q(t)}{(2t)^q}-\frac{B_q(|x-y|^2-(q+1)(z-w)^2) -C_qzw}{(2t)^{q+2}}+O\!\left(\frac{1}{t^{q+3}}\right)
\]
where $A_q(t),B_q,C_q$ were defined in \autoref{th:energybigt}. The remainder term $O(1/t^{q+3})$ is uniform with respect to $\xh$ and $\yh$ provided those points are confined to some bounded subset of $\Rnp$.
\end{proposition}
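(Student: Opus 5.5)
The plan is to expand each reflected term of the series for $G_t$ in powers of $1/t$. The $j=0$ term is exactly $|\xh-\yh|^{-q}$, so everything happens in the $H_t$ part of the decomposition $G_t = G_\infty + H_t$ from \eqref{eq:Hdef}. For $j \neq 0$ write $u_j = z-(-1)^j w$, so that $\xh-\rho_j(\yh) = (x-y,\, u_j-2tj)$ and
\[
|\xh-\rho_j(\yh)|^{-q} = (2t|j|)^{-q}\,(1+\varepsilon_j)^{-q/2}, \qquad
\varepsilon_j = -\frac{u_j}{tj} + \frac{|x-y|^2+u_j^2}{(2tj)^2} .
\]
When $\xh,\yh$ lie in a bounded set, $|\varepsilon_j| \leq C/(t|j|)$ uniformly. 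Hence for $t$ large we have $|\varepsilon_j|\le 1/2$ for all $j \neq 0$, and Taylor's theorem applies:
\[
(1+\varepsilon_j)^{-q/2} = 1 - \tfrac{q}{2}\varepsilon_j + \tfrac{q(q+2)}{8}\varepsilon_j^2 + O(|\varepsilon_j|^3).
\]

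\emph{Collecting terms by order.}
\begin{itemize}
\item[$\cdot$] Order zero gives $\sum_{j\neq0}(2t|j|)^{-q} = A_q/(2t)^q$ when $q>1$. When $q=1$ this zeroth-order term is exactly cancelled by the subtracted $1/|2tj|$, and the leftover $(\gamma-\log 4t)/t$ is precisely $A_1(t)/(2t)$.
\item[$\cdot$] Order one is $\tfrac{q}{2}\,u_j/(tj)$ times $(2t|j|)^{-q}$. Since $u_{-j}=u_j$, this is odd in $j$, so the terms for $j$ and $-j$ cancel.
\item[$\cdot$] Order two collects the $t^{-2}$ part of $-\tfrac q2\varepsilon_j$ together with the $u_j^2/(t^2j^2)$ part of $\tfrac{q(q+2)}{8}\varepsilon_j^2$. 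This gives
\[
(2t)^{-q-2}|j|^{-q-2}\Big(-\tfrac q2|x-y|^2 + \tfrac{q(q+1)}{2}u_j^2\Big).
\]
\end{itemize}

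\emph{Summing the second-order terms.} We have $u_j^2=(z-w)^2$ for even $j$ and $u_j^2=(z+w)^2=(z-w)^2+4zw$ for odd $j$. Summing over $j\neq0$ and using $B_q=\tfrac q2\sum_{j\ne0}|j|^{-q-2}$ gives
\[
-B_q\big(|x-y|^2-(q+1)(z-w)^2\big)
\]
together with
\[
2q(q+1)\,zw\sum_{j\text{ odd}}|j|^{-q-2} = C_q\,zw ,
\]
matching the statement.

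\emph{Remainder and the main difficulty.} The main point needing care is uniformity of the $O(t^{-q-3})$ remainder. The discarded pieces are of three kinds:
\begin{itemize}
\item[$\cdot$] the Taylor remainder $O(|\varepsilon_j|^3)$;
\item[$\cdot$] the $t^{-3}$ and $t^{-4}$ cross terms inside $\varepsilon_j^2$, such as $(u_j/tj)(|x-y|^2+u_j^2)/(tj)^2$;
\item[$\cdot$] for $q=1$, nothing further, since the only change is the subtracted constant.
\end{itemize}
All of these are bounded by $C(t|j|)^{-3}$ uniformly on bounded sets. Multiplying by $(2t|j|)^{-q}$ and summing over $j\neq0$ gives a convergent series, since $q+3>1$. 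The total is $O(t^{-q-3})$, uniformly for $\xh,\yh$ in a bounded subset of $\Rnp$. The only thing to check is that the threshold "$t$ large enough that $|\varepsilon_j|\le1/2$ for every $j$" depends only on the bounded set, which holds because $|\varepsilon_j|$ decreases in $|j|$.
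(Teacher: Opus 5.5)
Your proposal is correct and takes essentially the same approach as the paper. You subtract the $j=0$ term and $A_q(t)/(2t)^q$, binomially expand each reflected term $(1+\varepsilon_j)^{-q/2}$ to second order, cancel the first-order term by oddness in $j$, and split $u_j^2$ by the parity of $j$ to produce the $B_q$ and $C_q$ terms. Your remainder bookkeeping is more explicit than the paper's; the one slip is that $|\varepsilon_j|$ is not literally monotone in $|j|$ (since $u_j$ alternates with parity), but your uniform bound $|\varepsilon_j|\le C/(t|j|)$ already gives the threshold you need.
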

\begin{proof}[Proof of \autoref{pr:larget}]
First consider $q>1$. Subtracting $1/|\xh-\yh|^q$ and $A_q(t)/(2t)^q$ from the kernel gives that 
\begin{align}
& G_t(\xh,\yh) -\frac{1}{|\xh-\yh|^q}-\frac{1}{(2t)^q}\sum_{j\neq 0}\frac{1}{|j|^q} \nonumber \\ 
& = \sum_{j\neq 0} \left( \frac{1}{(|x-y|^2+(z-2tj-(-1)^jw)^2)^{q/2}} -\frac{1}{|2tj|^q}\right) \nonumber \\
& = \sum_{j\neq 0} \frac{1}{|2tj|^q} D(j), \label{eq:kernelttoinftyexpansion}
\end{align}
say. To estimate this summand $D(j)$, we expand with the binomial series to second order:
\begin{align}
D(j) & = \left(\frac{|x-y|^2+(z-(-1)^jw)^2-4tj(z-(-1)^jw)}{|2tj|^2}+1\right)^{\!\!-q/2}-1 \nonumber \\
& =q\frac{z-(-1)^jw}{2tj}-\frac{q}{2}\frac{|x-y|^2-(q+1)(z-(-1)^jw)^2}{|2tj|^2} +O\!\left( \frac{1}{|tj|^3}\right) \label{eq:dq}
\end{align}
as $t \to \infty$, where the constant implicit in the big $O$ term is uniform with respect to $x,y,z,$ and $w$ since they are bounded. The first term from \eqref{eq:dq} vanishes when summed over $j \neq 0$ in \eqref{eq:kernelttoinftyexpansion}, that is
\[
\sum_{j\neq 0} \frac{1}{|2tj|^q} q \frac{(z-(-1)^jw)}{2tj}=0,
\]
because the summand is odd with respect to $j$.

By plugging the second term from \eqref{eq:dq} into \eqref{eq:kernelttoinftyexpansion} and observing that
\[
(z-(-1)^jw)^2=\begin{cases} (z-w)^2 & \text{if $j$ is even,} \\ (z-w)^2+4zw & \text{if $j$ is odd,}\end{cases}
\]
we obtain that
\begin{align*}
& G_t(\xh,\yh) -\frac{1}{|\xh-\yh|^q}-\frac{1}{(2t)^q}\sum_{j\neq 0}\!\frac{1}{|j|^q} \nonumber \\ 
&= \frac{B_q(-|x-y|^2+(q+1)(z-w)^2)+C_qzw}{(2t)^{q+2}}+O\!\left( \frac{1}{t^{q+3}}\right),
\end{align*}
as needed for the proposition.

Now suppose $q=1$. By definition, the kernel is
\[
G_t(\xh,\yh)=\frac{1}{|\xh-\yh|}+\frac{A_1(t)}{2t} + \sum_{j\neq0} \left(\frac{1}{|\xh-\rho_j(\yh)|}-\frac{1}{|2tj|} \right) .
\]
The conclusion of the proposition for $q=1$ is now obtained by the same binomial expansion calculation as above. 
\end{proof}

\section{\bf Proofs for the strip energy: asymptotics and $t$-derivative}
\label{sec:stripenergy}
 
\subsection*{Proof of \autoref{th:energybigt}} The Riesz $q$-energy $E_K(\infty)$ is finite by hypothesis and so $E_K(t)$ is finite for all $t$ large enough that $K \subset S(t)$, by \autoref{le:finiteness}, and is continuous for those $t$, including at $t=\infty$. The equilibrium measure $\mu_\infty$ is unique since  $E_K(\infty)$ is finite. For $t<\infty$, we denote by $\mu_t$ an equilibrium measure (not necessarily unique) for $E_K(t)$.

Integrating both sides of the kernel expansion from \autoref{pr:larget} with respect to $d\mu_{\infty}(\xh)d\mu_{\infty}(\yh)$ and regarding $\mu_{\infty}$ as a trial measure for $E_{K}(t)$, so that $E_{K}(t)\leq \int_K \! \int_K G_t(\xh,\yh)\, d\mu_{\infty}\, d\mu_{\infty}$, we find  
\[ 
E_{K}(t) - E_K(\infty) - \frac{A_q(t)}{(2t)^{q}} \leq \frac{1}{(2t)^{q+2}}\left( \! -B_q \, M_q(K)+C_q \left( \int_K  z \, d\mu_{\infty}(\xh)\right)^{\!\! 2}\right) + O\!\left(\frac{1}{t^{q+3}}\right) 
\]
as $t \to \infty$.

To obtain an asymptotic inequality in the reverse direction, we integrate both sides of the kernel expansion from \autoref{pr:larget} with respect to $d\mu_t(\xh)d\mu_t(\yh)$ and regard $\mu_t$ as a trial measure for $E_K(\infty)$, so that $E_K(\infty) \leq \int_K \! \int_K |\xh-\yh|^{-q}\, d\mu_t d\mu_t$, hence obtaining that 
\begin{align*}
& E_{K}(t) - E_K(\infty) - \frac{A_q(t)}{(2t)^{q}} \\
& \geq \frac{1}{(2t)^{q+2}}\bigg( \!\! -B_q \int_K\int_K(|x-y|^2-(q+1)(z-w)^2)\,d\mu_{t}(\xh)d\mu_{t}(\yh) \\
& \hspace{8cm} +C_q \left( \int_K  z \, d\mu_{t}(\xh)\right)^{\!\!2}\bigg)+  O\!\left(\frac{1}{t^{q+3}}\right) .
\end{align*}
We will show below that $\mu_t$ converges weak-$*$ to $\mu_\infty$ as $t\to\infty$, which implies that the right side of the preceding inequality equals 
\[ 
\frac{1}{(2t)^{q+2}}\left(\! -B_q \, M_q(K)+C_q \left( \int_K  z \, d\mu_{\infty}(\xh)\right)^{\!\! 2}\right) + o\!\left(\frac{1}{t^{q+2}}\right) .
\]
Combining the upper and lower asymptotic inequalities now yields \autoref{th:energybigt}. 

It remains to show that $\mu_t$ converges weak-$*$ to $\mu_{\infty}$. Let $t_k$ be any sequence converging to $\infty$. Since the $\mu_t$ are probability measures on a fixed set $K$, there is a subsequence $t(l)=t_{k_l}$ such that $\mu_{t(l)}\to \mu$ weak-$*$, for some Borel measure $\mu$ supported on $K$. Weak-$*$ convergence means $\int_K f \, d\mu_{t(l)} \to \int_K f \, d\mu$ as $l \to \infty$, for every continuous function $f$ on $K$. In particular, choosing $f \equiv 1$ gives that $\mu(K)=1$. To prove $\mu=\mu_{\infty}$ (and hence that $\mu_t$ converges weak-$*$ to $\mu_\infty$) it is enough to show  
\begin{equation} \label{eq:Ginfineq}
\int_K \! \int_K |\xh-\yh|^{-q}\, d\mu d\mu \leq E_K(\infty),
\end{equation}
for then uniqueness of the Riesz $q$-equilibrium measure implies that $\mu=\mu_{\infty}$.

Let $N>0$, so that $\min(N,|\xh-\yh|^{-q})$ is a continuous function on $\Rnp \times \Rnp$. Weak-$*$ convergence of $\mu_{t(l)}$ to $\mu$ implies weak-$*$ convergence of the product measures to $\mu \times \mu$ (see \cite[Lemma 6.4]{W81}). Hence 
\begin{align*}
	\int_K \! \int_K \min(N,|\xh-\yh|^{-q})\, d\mu d\mu
	& = \lim_{l\to \infty} \int_K \! \int_K \min(N,|\xh-\yh|^{-q})\, d\mu_{t(l)} d\mu_{t(l)} \\
	& \leq \liminf_{l\to \infty} \int_K \! \int_K G_{t(l)}(\xh,\yh)\, d\mu_{t(l)} d\mu_{t(l)} \\
	& = \liminf_{l\to \infty} E_{K}(t(l)),  \,\, \text{since $\mu_{t(l)}$ is an equilibrium measure} \\
	& = E_K(\infty) 
\end{align*}
by continuity of the energy at $t=\infty$, where the  inequality follows from $|\xh-\yh|^{-q} \leq G_t(\xh,\yh)$ when $q>1$, and when $q=1$ follows from the fact that 
\[
|\xh-\yh|^{-1} = G_t(\xh,\yh) + O(t^{-1} \log t)
\]
by \autoref{pr:larget} with the remainder estimate being uniform over $\xh,\yh \in K$. Now by taking $N\to \infty$ on the left hand side and applying monotone convergence, we obtain the desired inequality \eqref{eq:Ginfineq}.

\subsection*{Proof of \autoref{th:t-to-zero}}
Part (a). Since $K \subset \Rn$, this lower bound on the energy follows immediately from the pointwise kernel inequality in \autoref{pr:kernel}(d). (In fact, that kernel inequality is strict and so the energy bound in part (a) is also strict, provided the right side of the inequality is finite.) 

Part (b). First suppose $1<q<n$. Let $K$ be interior $(q-1)$-capacitable in $\Rn$. We may suppose $V_{q-1}(K)<\infty$, since otherwise there is nothing to prove.
 
Let $\gamma$ be a probability measure on $K$. Using $\gamma$ as a trial measure for $E_{K}(t)$ and multiplying by $t$ gives that
\[
t E_{K}(t) \leq \int_K \! \int_K t G_t(x,y) \, d\gamma d\gamma .
\]
By \autoref{pr:twosided}, we have 
\[
t G_t(x,y) = \dfrac{c_{q-1}}{|x-y|^{q-1}} + \dfrac{1}{|x-y|^q} \, O(t).
\]
If $\int_K \! \int_K |x-y|^{-q} \, d\gamma d\gamma$ is finite (which in general it need not be), then letting $t \to 0$ yields that 
\begin{equation}\label{eq:tto0upperbound}
\limsup_{t \to 0} t E_{K}(t) \leq c_{q-1} \int_K \! \int_K \frac{1}{|x-y|^{q-1}} \, d\gamma d\gamma .
\end{equation}

Suppose $Q \subset K^{int}$ is compact with finite $(q-1)$-energy, where $K^{int}$ is the interior of $K$ as a subset of $\Rn$, and write $\nu$ for the Riesz $(q-1)$-equilibrium measure on $Q$. Consider the mollified measure 
\[
\gamma_\e = \eta_\e * \nu 
\]
where $\eta_\e$ is the uniform measure of mass $1$ on the centered ball of radius $\e$ in $\Rn$, so that $\gamma_\e$ is a unit measure and is supported in $K$ provided $\e$ is sufficiently small, because the compact subset $Q$ where $\nu$ is supported lies at positive distance from the boundary of $K$. The $q$-energy $\int_{K} \! \int_{K}  |x-y|^{-q} \, d\gamma_\e d\gamma_\e$ of $\gamma_\e$ is finite since $q<n$ and 
\[
d\gamma_\e(x) = \frac{1}{|\B^n(\e)|} \nu(x+\B^n(\e)) \, dx
\]
is bounded by $1/|\B^n(\e)|$ times Lebesgue measure on $K$, where the Lebesgue measure itself has finite $q$-energy. Moreover 
\[
\lim_{\e \to 0} \int_{K} \! \int_{K}  \frac{1}{|x-y|^{q-1}} \, d\gamma_\e d\gamma_\e = \int_Q \! \int_Q \frac{1}{|x-y|^{q-1}} \, d\nu d\nu = V_{q-1}(Q)
\]
by \cite[Lemma 1.2 on p.{\,}82]{L72}. Summing up, we know $\gamma_\e$ has finite energy with respect to the Riesz $q$-kernel, and that in the limit as $\e \to 0$ it is energy minimizing on $Q$ with respect to the $(q-1)$-kernel. 

Applying inequality \eqref{eq:tto0upperbound} with the choice $\gamma=\gamma_\e$ on the right side and then letting $\e \to 0$, we obtain that 
\[
\limsup_{t \to 0} t E_{K}(t) \leq c_{q-1} V_{q-1}(Q).
\]
The inequality in part (b) of the theorem now follows by infimizing over all $Q$ on the right side of the inequality and calling on the hypothesis that $K$ is interior $(q-1)$-capacitable.

The proof is essentially the same in the case $q=1$, by using the $q=1$ estimate from \autoref{pr:twosided}, writing $\nu$ for the logarithmic equilibrium measure on $Q$, and relying on \cite[Lemma 1.2{$^{\,\prime}$} on p.{\,}83]{L72} to justify the convergence of the logarithmic energies of the mollified measures, as $\e \to 0$.

Part (c) follows directly from parts (a) and (b).

\subsection*{Proof of \autoref{pr:EKderiv}}
The finiteness claim is immediate from \autoref{le:finiteness}, which shows also that $E_K(t)$ is continuous.

In the proof of that lemma, we decomposed $G_t(\xh,\yh)=|\xh-\yh|^{-q}+H_t(\xh,\yh)$. From the definition of $H_t$ in \eqref{eq:Hdef} one may confirm that $\frac{\partial H_t}{\partial t}(\xh,\yh)$ exists and is continuous jointly as a function of $t>0$, $\xh,\yh \in S(t)$. In that definition, remember that $\rho_j(\yh)$ depends on $t$.

Since $K\subset \Rn,$ we have $\xh=(x,0)$ and $\yh=(y,0)$ whenever $\xh,\yh \in K$, and so slightly abusing notation we may write $G_t(x,y)$ and $H_t(x,y)$. 

Let $\tau>t>0$. Using the equilibrium measure $\mu_t$ (which is unique by \autoref{le:uniqueness}) as a trial measure for $E_K(\tau)$, we have
\begin{align*}
\frac{E_K(\tau)-E_K(t)}{\tau-t} 
& \leq \int_K \! \int_K \frac{G_{\tau}(x,y)-G_t(x,y)}{\tau-t} \, d\mu_t d\mu_t \\
& = \int_K \! \int_K \frac{H_{\tau}(x,y)-H_t(x,y)}{\tau-t} \, d\mu_t d\mu_t.
\end{align*}
Therefore,
\begin{equation} \label{eq:derivupper}
\limsup_{\tau\to t^+} \frac{E_K(\tau)-E_K(t)}{\tau-t} \leq \int_K \! \int_K \frac{\partial H_t}{\partial t}(x,y) \, d\mu_t d\mu_t < \infty,
\end{equation}
since the difference quotient $(H_{\tau}(x,y)-H_t(x,y))/(\tau-t)$ converges uniformly to $\frac{\partial H_t}{\partial t}(x,y)$ on $K \times K$. 

In the other direction, using $\mu_{\tau}$ as a trial measure for $E_K(t)$, we have
\[
\frac{E_K(\tau)-E_K(t)}{\tau-t} 
\geq \int_K \! \int_K \frac{H_{\tau}(x,y)-H_t(x,y)}{\tau-t} \, d\mu_{\tau} d\mu_{\tau} .
\]
Below we will prove that $\mu_{\tau}$ converges weak-$*$ to $\mu_t$ as $\tau \to t$. We therefore have 
\begin{equation} \label{eq:derivlower}
\liminf_{\tau\to t^+} \frac{E_K(\tau)-E_K(t)}{\tau-t} \geq \int_K \! \int_K \frac{\partial H_t}{\partial t}(x,y) \, d\mu_t d\mu_t.
\end{equation} 
Inequalities \eqref{eq:derivupper} and \eqref{eq:derivlower} together prove that the derivative exists from the right, equalling
\[
\lim_{\tau \to t^+} \frac{E_K(\tau)-E_K(t)}{\tau-t} = \int_K \! \int_K \frac{\partial H_t}{\partial t}(x,y) \, d\mu_t d\mu_t.
\]
The argument is similar for the derivative from the left. And clearly $\partial H_t/\partial t = \partial G_t/\partial t$. 

To complete the proof, we now show that $\mu_{\tau}$ converges weak-$*$ to $\mu_t$ as $\tau \to t$. Consider any sequence $t_k$ converging to $t$. There is a subsequence $t(l)=t_{k_l}$ such that $\mu_{t(l)}\to \mu$ weak-$*$ as $l \to \infty$, for some probability measure $\mu$ supported on $K$. We will prove that $\mu=\mu_t$. To do so, since $\mu_t$ is the unique energy-minimizing measure for $E_K(t)$, it is enough to show 
\begin{equation} \label{eq:muismin}
\int_K \! \int_K G_t(x,y) \, d\mu d\mu \leq E_K(t).
\end{equation} 
Fix $N>0$. The function $\min(N, G_t(x,y))$ is continuous, and so by weak-$*$ convergence we have 
\begin{align}
& \int_K \! \int_K\min(N, G_t(x,y))\,  d\mu d\mu \nonumber \\
& =\lim_{l\to \infty} \int_K \! \int_K\min(N, G_t(x,y)) \, d\mu_{t(l)} d\mu_{t(l)} \nonumber \\
& \leq \liminf_{l\to \infty}  \int_K \! \int_K G_t(x,y) \, d\mu_{t(l)} d\mu_{t(l)} \nonumber  \\
& = \liminf_{l\to \infty}  \int_K \! \int_K G_{t(l)}(x,y) \, d\mu_{t(l)} d\mu_{t(l)} \quad \text{as explained below} \label{eq:replacet} \\
& = \liminf_{l\to \infty} E_K(t(l)) = E_K(t) , \nonumber 
\end{align}
where equality \eqref{eq:replacet} follows from the fact that $H_t(x,y)=H_{t(l)}(x,y)+o(1)$ as $l\to \infty$, with the $o(1)$ term being uniform on $K\times K$ by continuity of $H_t$. Letting $N \to \infty$ and applying monotone convergence now proves inequality \eqref{eq:muismin}.

Finally, continuity of $E_K^{\prime}(t)$ follows from continuity of $\frac{\partial H_t}{\partial t}$ and weak-$*$ convergence of $\mu_\tau \to \mu_t$ as $\tau \to t$.

\section{\bf Computable example}\label{sec:computableexample}

To complement the theoretical work in the rest of the paper, in this section we prove an explicit formula for the strip kernel when $n=3$ and $q=2$: 
\begin{equation}\label{eq:kernelspecialcase}
G_t(\xh,\yh) 
= \frac{\pi \sinh \! \frac{\pi |x-y|}{2 t}}{4t|x-y|} \left( \frac{1}{\cosh \! \frac{\pi |x-y|}{2 t} - \cos \! \frac{\pi |z-w|}{2t}} 
+\frac{1}{\cosh \! \frac{\pi |x-y|}{2 t} + \cos \! \frac{\pi  |z+w|}{2t}} \right) 
\end{equation}
for $x,y \in \R^3$ and $z,w \in (-t,t)$. If we further specialize to points in the $3$-plane, with $z=w=0$ so that $\xh=(x,0)$ and $\yh=(y,0)$, then the kernel simplifies easily to 
\begin{equation}\label{eq:kernelsimplified}
G_t(x,y) = \frac{\pi \coth \frac{\pi 
   |x-y|}{2 t}}{2t|x-y|} , \qquad x , y \in \R^3 .
\end{equation}

This formula will not be needed elsewhere in the paper, although it could aid readers who wish to plot the kernel as a function of $|x-y|$ in order to visualize how it behaves in the limiting cases, namely that $G_t(x,y) \to |x-y|^{-2}$ as $t \to \infty$, and $t G_t(x,y) > (\pi/2)|x-y|^{-1}$ with equality as $t \to 0$, and $G_t(x,y) \sim (\pi/2t) |x-y|^{-1}$ as $|x-y| \to \infty$. These special cases are consistent with putting $q=2$ into \autoref{pr:kernel} and \autoref{co:kerneldecay}.  

\begin{proof}[Proof of formula \eqref{eq:kernelspecialcase}] First we show that 
\begin{equation} \label{eq:cleanseries}
\sum_{k \in \Z} \frac{2a}{a^2+(b-k\pi)^2} = \frac{2 \sinh 2a}{\cosh 2a - \cos 2b}  
\end{equation}
when $a \geq 0, -\pi<b<\pi$, and $a$ and $b$ are not both zero. Indeed, the left side of \eqref{eq:cleanseries} equals 
\[
\begin{split}
& \sum_{k \in \Z} \left( \frac{1}{a+bi-k\pi i} + \frac{1}{a-bi+k\pi i} \right) \\
& = \frac{1}{a+bi} + \frac{1}{a-bi} + 2 \sum_{k=1}^\infty \left( \frac{a+bi}{(a+bi)^2+k^2\pi^2} + \frac{a-bi}{(a-bi)^2+k^2\pi^2} \right) 
\end{split}
\]
by splitting off the term with $k=0$ and combining the terms with $\pm k$. A standard series for $\coth z$ (see \cite[(4.36.3)]{DLMF}) reduces the last expression to $\coth(a+bi)+\coth(a-bi)$. Then writing $\coth$ in terms of complex exponentials and combining the two $\coth$'s, one arrives at the right side of \eqref{eq:cleanseries}. 

Applying formula \eqref{eq:cleanseries} with $a=\pi |x-y|/4t$ and $b=\pi(z-w)/4t$ and then multiplying both sides by $\pi/8t|x-y|$ yields that 
\[
\sum_{k \in \Z} \frac{1}{|\xh-\rho_{2k}(\yh)|^2} = \frac{1}{4t|x-y|} \, \frac{\pi \sinh \! \frac{\pi |x-y|}{2 t}}{\cosh \! \frac{\pi |x-y|}{2 t} - \cos \! \frac{\pi (z-w)}{2t}} ,
\]
which shows that the even-indexed terms in $G_t(\xh,\yh)$ sum to give one of the desired expressions in \eqref{eq:kernelspecialcase}. The odd-indexed terms in $G_t(\xh,\yh)$ give the other expression, since applying 
\eqref{eq:cleanseries} with $a=\pi |x-y|/4t$ and $b=\pi(z+w)/4t+\pi/2$ and then multiplying both sides by $\pi/8t|x-y|$ gives 
\[
\sum_{k \in \Z} \frac{1}{|\xh-\rho_{2k-1}(\yh)|^2} =  \frac{1}{4t|x-y|} \, \frac{\pi \sinh \! \frac{\pi |x-y|}{2 t}}{\cosh \! \frac{\pi |x-y|}{2 t} + \cos \! \frac{\pi  (z+w)}{2t}} .
\]
\end{proof}

\noindent \emph{Remarks.} 
1. If $K$ is a compact subset of $\R^3$ then by substituting the kernel formula \eqref{eq:kernelsimplified} above into \autoref{pr:EKderiv}, we find the appealing formula for $(n,q)=(3,2)$ that  
\[
(t E_K(t))^\prime = \frac{\pi^2}{4t^2} \int_K \! \int_K \csch^2 \left( \frac{\pi |x-y|}{2t} \right) d\mu_t d\mu_t , \qquad t>0 .
\]

2. A closed-form expression for $G_t$ can seemingly be obtained whenever the dimension $n$ is odd and $q=n-1$, although the formulas we have found using Mathematica become rapidly complicated as $n$ increases.
\section*{Acknowledgments}
Laugesen's research was supported by grants from the Simons Foundation (\#964018) and the National Science Foundation ({\#}2246537). 

\section*{Statements and Declarations}

\subsection*{Competing interests}
The authors have no financial or non-financial interests that are directly or indirectly related to the work.
The authors have no conflicts of interest to declare that are relevant to the content of this article.
All authors certify that they have no affiliations with or involvement in any organization or entity with any financial interest or non-financial interest in the subject matter or materials discussed in this manuscript.

\subsection*{Data availability}
Not applicable -- no data sets were generated or analysed.

\bibliographystyle{plain}

\end{document}